\begin{document}
	\rightline{\baselineskip16pt\rm\vbox to20pt{
			{
				\hbox{OCU-PHYS-584}
				\hbox{AP-GR-193}
			}
			\vss}}%

	\begin{center}
		{\LARGE On Sasakian quasi-Killing spinors in three-dimensions}\\
		\bigskip\bigskip
		{\large
			Satsuki Matsuno\footnote{smatsuno43@gmail.com}\\
			Fumihiro Ueno\footnote{ueno-fumihiro-dx@alumni.osaka-u.ac.jp}
		}\\
		\bigskip
		{\it Department of Physics, Graduate School of Science,
			Osaka Metropolitan University\\
			3-3-138 Sugimoto, Sumiyoshi, Osaka 558-8585, Japan}
	\end{center}
	
	
	\begin{abstract}
		A Sasakian quasi-Killing spinor (SqK-spinor), which is a generalization of a Killing spinor on Sasakian manifolds, was defined in \cite{Kim Friedrich 2000}.
		The purpose of this paper is to study in detail SqK-spinors on three-dimensional pseudo-Riemannian Sasakian space-form.
		We briefly review some results on SqK-spinors and then investigate some geometric properties.
		First, we demonstrate that the Reeb vector field is described by a specific SqK-spinor. Then we establish that the motion of a charged particle in the presence of a contact Maxwell field can be described by a SqK-spinor. Furthermore, we discuss that almost all SqK-spinors provide solutions to the Einstein-Dirac system with a non-zero cosmological constant. 
		Additionally, we clarify that some SqK-spinor and contact Maxwell field constitute solutions to the Einstein-Dirac-Maxwell systems.
		Finally, we show explicit formulae of SqK-spinors in terms of elementary functions with respect to a certain frame.
	\end{abstract}
	
	\tableofcontents

	\section{Introduction}
	A Sasakian quasi-Killing spinor (SqK-spinor) has significance from some perspective.
	In \cite{Kim Friedrich 2000} a weak Killing spinor (WK-spinor) which is a generalization of a Killing spinor was proposed to construct a solution to the Einstein-Dirac system.
	A SqK-spinor was introduced as a candidate for a WK-spinor on a Sasakian manifold with constant scalar curvature.
	In particular, a WK-spinor is a SqK-spinor on a three-dimensional Sasakian space-form.
	A SqK-spinor is also regarded as a generalization of a Killing spinor on Sasakian manifolds.
	A Killing spinor is an important subject in both mathematics and physics and have been studied extensively \cite{Palomo-Lozano,Alekseevsky 1998,Leitner,Alekseevsky 2009,Moroianu,Friedrich 2003,Rugina}.
	A generalized Killing spinor is an interesting subject and has been studied, but much remains to be clarified.
	A SqK-spinor is a special case of the generalized Killing spinor, but there have been few studies.
	SqK-spinors are useful for constructing exact solutions to the Einstein-Dirac-Maxwell system \cite{Matsuno Ueno 2023}.
	It is worthwhile to investigate mathematical and physical properties of a SqK-spinor.
	
	The purpose of this paper is to investigate the geometric properties of a SqK-spinor and to show that a SqK-spinor solves to not only the Einstein-Dirac system but also the Einstein-Dirac-Maxwell system with a contact Maxwell field on Riemannian and Lorentzian Sasakian manifolds in three dimensions.
	
	The organization of this paper is as follows.
	In section \ref{sec:riemann Reviews and Preliminaries}, we briefly review some results on SqK-spinors and prepare some notations.
	In section \ref{sec:Mathematical and physical properties}, we investigate some geometric properties of a SqK-spinor.
	We give the conditions under which the Dirac current becomes a Killing vector field.
	We also clarify that the Dirac current of a certain SqK-spinor follows the equation of motion of a charged particle in the presence of a contact magnetic field, and a physical relationship between a Dirac-like current and a Maxwell field defined by SqK-spinors.
	In section \ref{sec:Solutions to DM, ED and EDM systems}, we completely determine weak Killing spinors in three-dimensional Lorentzian Sasakian manifolds and construct solutions to the Einstein-Dirac system similar to a discussion in \cite{Kim Friedrich 2000}.
	It is also shown that a SqK-spinor and a contact magnetic field solve to the Dirac-Maxwell and the Einstein-Dirac-Maxwell system in the three-dimensional pseudo-Riemannian Sasakian space-form.
	We also discuss energy conditons of SqK-spinors of all types.
	In section \ref{sec:Explicit formula of SqK-spinors}, explicit representations of SqK-spinors are given.
	In section \ref{sec:Conclusions}, we summarize our results and discuss its significance.
	
	\section{Preliminaries}\label{sec:riemann Reviews and Preliminaries}
	Here we prepare some basic formulae and review the existence theorem of a SqK-spinor in three dimensions.
	A Sasakian manifold is a contact metric manifold $(M,\phi,\xi,\eta,g)$ satisfying $R(X,Y)\xi=\eta(Y)X-\eta(X)Y$ \cite{blair}.
	For a vector $X$ orthogonal to $\xi$, a sectional curvature of the form $K(X,\phi(X))$ is called a $\phi$-sectional curvature, where $K(X,Y)$ denotes the sectional curvature of the plane section spanned by $X$ and $Y$.
	A simply connected Sasakian manifold whose $\phi$-sectional curvatures are all the same and constant is called a Sasakian space-form.
	
	Let $(M,\phi,\xi,\eta,g)$ be a Riemannian-Sasakian manifold.
	We define the Lorentzian metric $\tilde g=g-2\eta\otimes\eta$, then $(M,\phi,\xi,\eta,\tilde g)$ is a Lorentzian-Sasakian manifold\cite{Brunetti 2013}.
	Especially, Lorentzian-Sasakian space-form is obtained from Riemannian-Sasakian space-form by this correspondence.
	The relationship between a $\phi$-sectional curvature $K(X,\phi X)$ of $(M,\phi,\xi,\eta,g)$ and a $\phi$-sectional curvature $\widetilde K(X,\phi X)$ of $(M,\phi,\xi,\eta,\tilde g)$ is given by $\widetilde K(X,\phi X)=K(X,\phi X)+6$.
	
	Let $(M^{r,3-r},\phi,\xi,\eta,g)$ be a three-dimensional pseudo-Riemannian Sasakian manifold, where the index $r$ is zero for the Riemannian case or one for the Lorentzian case.
	We denote by $H$ the $\phi$-sectional curvature, then the Ricci tensor is given by
	\begin{align}
		{\rm Ric}= (H+(-1)^r)g +(1-(-1)^rH)\eta\otimes\eta.
	\end{align}
	
	Let $(M^{r,3-r},\phi,\xi,\eta,g)$ be a pseudo-Riemannian Sasakian manifold.
	A SqK-spinor of type $(a,b)$ is defined as a spinor satisfying
	\begin{align}
		\nabla_X\psi=a(\sqrt{-1})^rX\cdot\psi+b(\sqrt{-1})^r\eta(X)\xi\cdot\psi,
	\end{align}
	where $a$ and $b$ are real constants.
	
	From Lemma 6.4. of \cite{Kim Friedrich 2000}, if there exists a SqK-spinor of type $(a,b)$, then the scalar curvature $S$ satisfies
	\begin{align}
		S=(-1)^r(8n(2n+1)a^2+16nab). \label{eq:S,a,b}
	\end{align}
	A three-dimensional simply connected pseudo-Riemannian Sasakian manifolds with constant scalar curvature is a Sasakian space-form.
	Thus from now on we will consider a three-dimensional pseudo-Riemannian Sasakian space-form $M^{r,3-r}$ whose constant $\phi$-sectional curvature is $H$, and $M^{r,3-r}$ is classified as in the following Table \ref{tb:M and H}.
	
	\begin{table}[H]
		\caption{The relationship of $M$ and $\phi$-sectional curvature $H$}
  \label{tb:M and H}
		\centering
		\begin{tabular}{|c|c|c|c|}
			\hline
			$H$  & $H<(-1)^{r-1}3$ & $H=(-1)^{r-1}3$ & $(-1)^{r-1}3<H$ \\
			\hline
			$M^{r,3-r}$ & $\widetilde{SL(2,\mathbb{R})}$ & Nil$^{r,3-r}$ & $S^{r,3-r}$ \\
			\hline
		\end{tabular}
	\end{table}
	
	On a three-dimensional pseudo-Riemannian Sasakian space-form $(M^{r,3-r},\phi,\xi,\eta,g)$, there exists a global frame $\{e_1=\xi,e_2,e_3\}$ that satisfies
	\begin{align}
		[e_1,e_2]=-\frac{H+(-1)^r3}{2}e_3,\ [e_1,e_3]=\frac{H+(-1)^r3}{2}e_2,\ [e_2,e_3]=-2e_1.
	\end{align}
	We call this frame a Sasakian frame \cite{Matsuno Ueno 2023}.
	We denote the co-frame by $\{\omega^1=\eta,\omega^2,\omega^3\}$.
	The Riemannian connection with respect to a Sasakian frame is given by
	\begin{align}
		\nabla_{e_i}e_j=-\sum_{k=1}^3c_i\epsilon_{ij}^{~~k}e_k,\ 
		c_1=\frac{H+(-1)^r}{2},\ c_2=c_3=(-1)^r.
	\end{align}
	
	The Pauli matrices are given by
	\begin{align}
		\sigma_1=\begin{pmatrix}0 & 1 \\ 1 & 0\end{pmatrix},\ 
		\sigma_2=\begin{pmatrix}0 & -\sqrt{-1} \\ \sqrt{-1} & 0\end{pmatrix},\ 
		\sigma_3=\begin{pmatrix}1 & 0 \\ 0 & -1\end{pmatrix},
	\end{align}
	and we define the Gamma matrices $\gamma_1=(-\sqrt{-1})^{r-1}\sigma_1,\gamma_2=\sqrt{-1}\sigma_2,\gamma_3=\sqrt{-1}\sigma_3$.
	
	Then the spin connection form $\omega^S$ is given by
	\begin{align}
		\omega^S=\frac{(-\sqrt{-1})^r}{2}\sum_i\omega^i\gamma_i+(\sqrt{-1})^r\frac{H+(-1)^{r-1}}{4}\omega^1\gamma_1.\label{eq:spin conn form}
	\end{align}
	Now we define the SqK-connection $\nabla^{\rm SqK}$
	\begin{align}
		\nabla^{\rm SqK}_X\psi:=\nabla_X\psi-(\sqrt{-1})^r(aX\cdot\psi+b\eta(X)\xi\cdot\psi),
	\end{align}
	then a SqK-spinor of type $(a,b)$ is a $\nabla^{\rm SqK}$-parallel spinor.
	The curvature form $\Omega^{\rm SqK}$ is given by
	\begin{align}
		&\Omega^{\rm SqK}(e_i,e_j)=\sum_k\epsilon_{ijk}b_k\gamma_k,\\
		&b_1=(\sqrt{-1})^r\frac{H}{2} - 2(\sqrt{-1})^r a^{2} - 2 b,\\
		&b_2=b_3=-2(-\sqrt{-1})^ra^2-2(-\sqrt{-1})^rab+(-1)^rb+\frac{(\sqrt{-1})^r}{2}
	\end{align}
	Therefore, the integrability condition of a SqK-spinor is $\Omega^{\rm SqK}\equiv0$, that is $b_k=0$.
	Thus, we obtain the following theorem.
	
	\begin{proposition}
		Let $(M^3,\phi,\xi,\eta,g)$ be a three-dimensional Riemannian-Sasakian space-form with $\phi$-sectional curvature $H$.
		There exists a SqK-spinor of type $ \left(\frac{1}{2},\frac{H-1}{4}\right)$.
		In the case of $ H>-3$, $M$ is homeomorphic to $S^3$, then there exists a SqK-spinor of type $\left(\frac{1\pm \sqrt{3+H}}{2},\frac{-2\mp\sqrt{3+H}}{2}\right)$. (\cite{Kim Friedrich 2000},Theorem 8.6.)
	\end{proposition}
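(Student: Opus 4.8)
The plan is to use the integrability criterion already in hand: an SqK-spinor of type $(a,b)$ is by definition a $\nabla^{\rm SqK}$-parallel spinor, and since $M^3$ is a simply connected space-form, the vanishing of the curvature $\Omega^{\rm SqK}$ is not only necessary but sufficient for the global existence of a nowhere-vanishing such spinor: a flat connection on a simply connected base has trivial holonomy, so parallel transport of any nonzero vector in a fibre is path-independent and defines a global parallel spinor, which is nowhere zero because parallel transport is a fibrewise isomorphism. Thus the proof reduces to solving the algebraic system $b_1=b_2=b_3=0$ for the pair of real constants $(a,b)$ in terms of $H$, in the Riemannian case $r=0$.

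Substituting $r=0$ into the formulae for $b_1,b_2,b_3$, the equation $b_1=0$ reads $2a^2+2b=\tfrac{H}{2}$, i.e.\ $b=\tfrac{H}{4}-a^2$, and $b_2=b_3=0$ reads $2a^2+2ab-b-\tfrac12=0$. Eliminating $b$ between them produces a single cubic in $a$,
\begin{align}
2a^3-3a^2-\tfrac{H}{2}\,a+\tfrac{H}{4}+\tfrac12=0 .
\end{align}
A direct check shows that $a=\tfrac12$ is a root for every $H$, so the cubic factors as
\begin{align}
\Bigl(a-\tfrac12\Bigr)\Bigl(2a^2-2a-\tfrac{H+2}{2}\Bigr)=0 ,
\end{align}
whose remaining roots are $a=\tfrac{1\pm\sqrt{3+H}}{2}$. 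Feeding each root back into $b=\tfrac{H}{4}-a^2$ gives $b=\tfrac{H-1}{4}$ for $a=\tfrac12$, and $b=\tfrac{-2\mp\sqrt{3+H}}{2}$ for the other two; these are precisely the types in the statement (and, since a cubic has at most three roots, they exhaust all SqK-spinor types on $M^3$).

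Finally I would sort out the admissibility of each root. The value $a=\tfrac12$ is real for all $H$, so the type $\bigl(\tfrac12,\tfrac{H-1}{4}\bigr)$ occurs on every three-dimensional Riemannian-Sasakian space-form. The quadratic factor has real roots exactly when $3+H\ge0$, and when $H>-3$ these are two further distinct real solutions; by Table \ref{tb:M and H} this is exactly the regime in which $M^3\cong S^3$, which matches the second assertion. The only step that is not entirely routine is the factorization of the cubic --- equivalently, spotting that $a=\tfrac12$ is a universal root independent of $H$ --- together with the structural point that $\Omega^{\rm SqK}\equiv0$ yields a globally defined, nowhere-zero spinor, which is where simple connectivity of the space-form is essential.
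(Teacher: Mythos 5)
Your proposal is correct and follows essentially the same route as the paper: the paper derives this proposition directly from the computed curvature form $\Omega^{\rm SqK}$ by imposing the integrability condition $b_1=b_2=b_3=0$ and solving for $(a,b)$, which is exactly your cubic-factorization argument with $r=0$. Your explicit remarks on why flatness plus simple connectivity yields a global nowhere-zero parallel spinor, and on the exhaustiveness of the three roots, merely spell out what the paper leaves implicit.
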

	
	\begin{proposition}
		Let $(M^{1,2},\phi,\xi,\eta,g)$ be a three-dimensional Lorentzian-Sasakian space-form with $\phi$-sectional curvature $H$.
		Then there exists a SqK-spinor of type $ \left(-\frac{1}{2},\frac{H+1}{4}\right)$.
		In the case of $H<3$, $M$ is homeomorphic to $\widetilde{SL(2,\mathbb{R})}$, then there exists a SqK-spinor of type $\left(\frac{-1 \pm \sqrt{- H+3}}{2}, \frac{2 \mp \sqrt{- H+3}}{2} \right)$.
	\end{proposition}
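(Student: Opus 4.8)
\emph{Plan of proof.} The plan is to run, essentially verbatim, the argument that gives the Riemannian Proposition~8.6 of \cite{Kim Friedrich 2000} quoted above, but now with the index $r=1$. A three-dimensional Lorentzian-Sasakian space-form is simply connected by definition, so the holonomy principle tells us that SqK-spinors of type $(a,b)$ exist (and in fact sweep out a two-complex-dimensional space) exactly when the connection $\nabla^{\rm SqK}$ is flat, i.e.\ $\Omega^{\rm SqK}\equiv 0$, i.e.\ $b_{1}=b_{2}=b_{3}=0$ for the curvature coefficients displayed above. Thus the whole statement reduces to solving that algebraic system, with $r=1$ inserted, for \emph{real} pairs $(a,b)$ regarded as functions of the constant $\phi$-sectional curvature $H$.

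First I would isolate the ``universal'' family. Looking at $b_{2}=b_{3}$ with $r=1$, a single value of $a$ --- here $a=-\tfrac12$ --- makes this coefficient vanish identically in $H$, and then the one surviving equation $b_{1}=0$ forces $b=\tfrac{H+1}{4}$; this produces the SqK-spinor that exists on every Lorentzian-Sasakian space-form. For the remaining solutions I would take $a\neq-\tfrac12$, eliminate $b$ from $b_{2}=b_{3}=0$ to write $b$ as an explicit function of $a$, and substitute into $b_{1}=0$; this collapses to a quadratic in $a$ whose discriminant is a positive multiple of $3-H$. Hence extra real solutions appear precisely when $H\le 3$, and for $H<3$ the quadratic formula delivers the stated pair $\bigl(\tfrac{-1\pm\sqrt{-H+3}}{2},\ \tfrac{2\mp\sqrt{-H+3}}{2}\bigr)$. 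At $H=3$ the square root disappears and this pair degenerates to $(-\tfrac12,1)=(-\tfrac12,\tfrac{H+1}{4})$, already covered by the universal family --- which is why the statement uses the strict inequality $H<3$ --- while by Table~\ref{tb:M and H} with $r=1$ (so the threshold is $(-1)^{r-1}3=3$) the condition $H<3$ is exactly $M\cong\widetilde{SL(2,\mathbb{R})}$. Once a real $(a,b)$ satisfying $b_{k}=0$ is in hand, $\nabla^{\rm SqK}$ is flat and the parallel spinor it carries is the desired SqK-spinor.

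I expect the only real hurdle to be bookkeeping: correctly carrying the $r$-dependent factors $(\sqrt{-1})^{r}$, $(-\sqrt{-1})^{r}$, $(-1)^{r}$, $(-1)^{r-1}$ through the $b_{k}$ at $r=1$, deciding which part of each (now genuinely complex) coefficient has to vanish, and then steering the quadratic-formula step so that it lands on exactly the coefficients in the statement rather than on an equivalent messier form. A convenient sanity check is that the final list is the image of the Riemannian Proposition under $(a,b,H)\mapsto(-a,-b,-H)$ --- the symmetry one anticipates from the metric correspondence $\tilde g=g-2\eta\otimes\eta$ recalled in Section~\ref{sec:riemann Reviews and Preliminaries}, and the same symmetry visible in the reversed curvature thresholds of Table~\ref{tb:M and H}.
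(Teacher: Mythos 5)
Your proposal is correct and is essentially the paper's own argument: the paper states these propositions immediately after computing the curvature $\Omega^{\rm SqK}$ and declaring the integrability condition $b_k=0$, so the "proof" is exactly the algebra you describe --- isolating $a=-\tfrac12$ (which kills $b_2=b_3$ identically and then forces $b=\tfrac{H+1}{4}$), and otherwise eliminating $b$ to get a quadratic in $a$ with discriminant proportional to $3-H$, combined with simple connectedness and the holonomy principle. Your sanity check via the correspondence $(a,b,H)\mapsto(-a,-b,-H)$ induced by $\tilde g=g-2\eta\otimes\eta$ matches the paper's stated coefficients, so nothing further is needed.
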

	
	We denote by $\mathcal{S}_0$ the set of SqK-spinors of type $ \left(\frac{(-1)^r}{2},\frac{H-(-1)^r}{4}\right)$ and by $\mathcal{S}_\pm$ the set of that of type $ \left(\frac{(-1)^r\pm \sqrt{3+(-1)^rH}}{2},\frac{(-1)^{r-1}2\mp\sqrt{3+(-1)^rH}}{2}\right)$, and we denote by $\Sigma$ the spinor bundle.
	
	\section{Geometric properties of SqK-spinors}\label{sec:Mathematical and physical properties}
	In this section, we discuss four geometric properties of SqK-spinors.
	First, the Reeb vector field $\xi$ defines the mapping which maps a SqK-spinor to another one.
	
	\begin{proposition}\label{pr:xi map}
		Let $(M^{r,3-r},\phi,\xi,\eta,g)$ be a three-dimensional pseudo-Riemannian Sasakian space-form with $\phi$-sectional curvature $H$, and let $\varphi$ be a SqK-spinor of type $(a,b)$.
		Then $\psi=\xi\varphi$ is a SqK-spinor of type $((-1)^r-a,(-1)^{r-1}+2a+b)$.
		Therefore the map $\xi:\Gamma(\Sigma)\ni\psi\mapsto\xi\psi\in\Gamma(\Sigma)$ is the map such that $\xi:\mathcal{S}_0\to\mathcal{S}_0$ and $\xi:\mathcal{S}_\pm\to\mathcal{S}_\mp$.
	\end{proposition}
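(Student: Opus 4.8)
The plan is to compute $\nabla_X\psi$ directly from the definition $\psi=\xi\cdot\varphi$, using the SqK-equation for $\varphi$ and Clifford algebra identities. First I would apply the Leibniz rule:
\begin{align*}
\nabla_X(\xi\cdot\varphi)=(\nabla_X\xi)\cdot\varphi+\xi\cdot\nabla_X\varphi.
\end{align*}
For the first term I need $\nabla_X\xi$; on a Sasakian manifold $\nabla_X\xi=-\phi(X)$ (up to sign conventions that must be pinned down from the stated connection data in the Sasakian frame), so $(\nabla_X\xi)\cdot\varphi=-\phi(X)\cdot\varphi$. For the second term I substitute $\nabla_X\varphi=a(\sqrt{-1})^rX\cdot\varphi+b(\sqrt{-1})^r\eta(X)\xi\cdot\varphi$, obtaining $\xi\cdot\nabla_X\varphi=a(\sqrt{-1})^r\,\xi\cdot X\cdot\varphi+b(\sqrt{-1})^r\eta(X)\,\xi\cdot\xi\cdot\varphi$.

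The next step is to rewrite everything back in terms of $\psi=\xi\cdot\varphi$ and the target type. The key Clifford identities in three dimensions are $\xi\cdot\xi=-g(\xi,\xi)=-1$ (in the Riemannian case; $(-1)^r$ in general, which is exactly where the sign $(-1)^r$ in the statement comes from), and the reordering $\xi\cdot X\cdot = -X\cdot\xi\cdot - 2g(\xi,X)$, equivalently $\xi\cdot X\cdot = -X\cdot\xi\cdot - 2\eta(X)$ after absorbing the metric factor. Decomposing $X = \eta(X)\xi + X^\perp$ with $X^\perp\perp\xi$, one has $\xi\cdot X^\perp\cdot\varphi$ related to $\phi(X)\cdot\varphi$ via the Sasakian identity connecting Clifford multiplication by $\xi$ with the endomorphism $\phi$ (this is the three-dimensional fact that Clifford multiplication by the volume/Reeb direction implements $\phi$ on the orthogonal complement). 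Collecting the $X\cdot\psi$ terms and the $\eta(X)\xi\cdot\psi=\eta(X)\psi\cdot(\pm1)$ terms, the coefficient of $X\cdot\psi$ should come out to $((-1)^r-a)(\sqrt{-1})^r$ and the coefficient of $\eta(X)\xi\cdot\psi$ to $((-1)^{r-1}+2a+b)(\sqrt{-1})^r$, which is the claimed type.

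The main obstacle will be bookkeeping the signs and factors of $\sqrt{-1}$: the paper uses a twisted Gamma-matrix convention ($\gamma_1=(-\sqrt{-1})^{r-1}\sigma_1$, etc.) and the SqK-equation carries $(\sqrt{-1})^r$, so one must be careful that $\phi(X)\cdot\varphi$ gets converted into a multiple of $\xi\cdot X\cdot\varphi$ with the correct phase, and that $\xi\cdot\xi\cdot\varphi=(-1)^r\varphi$ rather than $-\varphi$. I would verify the conversion $-\phi(X)\cdot\varphi = (\text{phase})\,\xi\cdot X^\perp\cdot\varphi$ explicitly in the Sasakian frame using the given Clifford action, rather than quoting a convention-dependent formula. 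Once the type of $\psi$ is established as $((-1)^r-a,(-1)^{r-1}+2a+b)$, the statements about $\mathcal{S}_0$ and $\mathcal{S}_\pm$ follow by plugging in: for $(a,b)=\left(\frac{(-1)^r}{2},\frac{H-(-1)^r}{4}\right)$ one checks the image type equals the same pair (so $\xi:\mathcal{S}_0\to\mathcal{S}_0$), and for the two types defining $\mathcal{S}_\pm$ one checks that the map $(a,b)\mapsto((-1)^r-a,(-1)^{r-1}+2a+b)$ swaps the $\pm$ square-root branches, giving $\xi:\mathcal{S}_\pm\to\mathcal{S}_\mp$. These are short algebraic verifications and I would present them as a brief closing computation.
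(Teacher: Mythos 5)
Your proposal follows essentially the same route as the paper: Leibniz rule on $\xi\cdot\varphi$, substitution of the SqK-equation, the Clifford reordering $\xi\cdot X\cdot=-X\cdot\xi\cdot-2g(\xi,X)$, and the identification of $(\nabla_X\xi)\cdot\varphi$ with $(-\sqrt{-1})^r(X\cdot\xi\cdot+g(\xi,X))\varphi$ (the paper writes this Clifford operator directly rather than passing through $-\phi(X)$, but it is the same computation), followed by the short algebraic check of how the type map acts on $\mathcal{S}_0$ and $\mathcal{S}_\pm$. The approach is correct; just note that your parenthetical ``$\xi\cdot\xi=-g(\xi,\xi)=-1$ in the Riemannian case, $(-1)^r$ in general'' should read $(-1)^{r+1}$, one of the sign points you already flagged as needing care.
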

	
	\begin{proof}
		The following calculation implies the claim.
		\begin{align*}
			\nabla_X\psi&=(\nabla_X\xi)\varphi+(\sqrt{-1})^ra\xi\cdot X\cdot\varphi+(\sqrt{-1})^rb\eta(X)\xi\cdot\psi\\
			&=(-\sqrt{-1})^r(X\cdot\xi\cdot+g(\xi,X))\varphi+a(\sqrt{-1})^r\xi\cdot X\cdot\varphi+b(\sqrt{-1})^r\eta(X)\xi\cdot\psi\\
			&=(-\sqrt{-1})^rX\cdot\psi-(-\sqrt{-1})^r\eta(X)\xi\cdot\psi-a(\sqrt{-1})^rX\cdot\psi\\
			&+2a(-\sqrt{-1})^rg(\xi,X)\xi\cdot\psi+b(\sqrt{-1})^r\eta(X)\xi\cdot\psi\\
			&=(\sqrt{-1})^r((-1)^r-a)X\cdot\psi+(\sqrt{-1})^r((-1)^{r-1}+2a+b)\eta(X)\xi\cdot\psi 
		\end{align*}
	\end{proof}
	
	On a three-dimensional pseudo-Riemannian spin manifold, a Spin-invariant bilinear form is given by $\langle\psi_1,\psi_2\rangle:=\psi_1^\dagger(\gamma_1)^r\psi_2$.
	The Dirac current of a spinor $\psi$ is defined as the vector field
	\begin{align}
		J_\psi:=\sum_{i=1}^3(-\sqrt{-1})^{r-1}\langle\psi, \gamma^i\psi\rangle e_i,
	\end{align}
	where $\{e_i\}_{i=1,2,3}$ is an orthonormal frame.
	For a SqK-spinor $\psi$, we have immediately
	$$
	X\langle\psi,\psi\rangle=\langle (\sqrt{-1})^raX\cdot\psi+(\sqrt{-1})^rb\xi\cdot\psi,\psi\rangle+\langle \psi, (\sqrt{-1})^raX\cdot\psi+(\sqrt{-1})^rb\xi\cdot\psi\rangle=0,
	$$
	so the quadratic form $\langle\psi,\psi\rangle$ is constant.
	A derivative formula of the Dirac current of a SqK-spinor is as follows.
	
	\begin{lemma}\label{lm:derivative of current}
		Let $(M^{r,3-r},\phi,\xi,\eta,g)$ be a three-dimensional pseudo-Riemannian Sasakian space-form with $\phi$-sectional curvautre $H$, and let $\{e_1,e_2,e_3\}$ be a Sasakian frame.
		Let $\psi$ be a SqK-spinor $\psi$ of type $(a,b)$, and put $J_\psi$ be the Dirac current of $\psi$ and $J_i:=g(J_\psi,e_i)$.
		Then we have
		\begin{align}
			&e_j(J_i) =\sum_k(2a\epsilon_{jik}+2b\eta(e_j) \epsilon_{1ik} -c_j\epsilon_{jik})J^k, \label{eq:derivative of current}\\
			&g(\nabla_{e_j}J_\psi,e_i)=\sum_k(2a\epsilon_{jik}+2b\eta(e_j) \epsilon_{1ik})J^k.\label{eq:nabla J}
		\end{align}
	\end{lemma}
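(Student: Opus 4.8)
The plan is to establish \eqref{eq:nabla J} first and then read off \eqref{eq:derivative of current} from it. For the latter implication nothing is needed beyond the Leibniz rule: since $J_i=g(J_\psi,e_i)$ one has $e_j(J_i)=g(\nabla_{e_j}J_\psi,e_i)+g(J_\psi,\nabla_{e_j}e_i)$, and the formula $\nabla_{e_j}e_i=-c_j\sum_k\epsilon_{ji}^{~~k}e_k$ for the Sasakian connection turns the second summand into $-c_j\sum_k\epsilon_{jik}J^k$; hence \eqref{eq:derivative of current} is immediate once \eqref{eq:nabla J} is known.

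To prove \eqref{eq:nabla J} I would start from the identity $g(J_\psi,X)=(-\sqrt{-1})^{r-1}\langle\psi,X\cdot\psi\rangle$, which holds for every vector field $X$, and apply $\nabla_{e_j}$ to it. Using that the spin-invariant form $\langle\cdot,\cdot\rangle$ is parallel for the spin connection and that Clifford multiplication is parallel (so that $\nabla_{e_j}(X\cdot\psi)=(\nabla_{e_j}X)\cdot\psi+X\cdot\nabla_{e_j}\psi$), the contribution $g(J_\psi,\nabla_{e_j}X)=(-\sqrt{-1})^{r-1}\langle\psi,(\nabla_{e_j}X)\cdot\psi\rangle$ cancels on the two sides and one is left with
\[
g(\nabla_{e_j}J_\psi,X)=(-\sqrt{-1})^{r-1}\bigl(\langle\nabla_{e_j}\psi,X\cdot\psi\rangle+\langle\psi,X\cdot\nabla_{e_j}\psi\rangle\bigr).
\]
Now I would substitute the SqK-equation $\nabla_{e_j}\psi=(\sqrt{-1})^r(a\,e_j+b\,\eta(e_j)\xi)\cdot\psi$ and set $X=e_i$, writing $e_i\cdot\psi=\gamma_i\psi$ in the chosen spin representation. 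Moving the derivative off the first slot is done with the adjunction of Clifford multiplication with respect to $\langle\cdot,\cdot\rangle$, which one checks (from $\gamma_i^{\dagger}(\gamma_1)^r=(-1)^{r+1}(\gamma_1)^r\gamma_i$, after absorbing the explicit powers of $\sqrt{-1}$) takes the uniform form $\langle(\sqrt{-1})^rY\cdot\psi_1,\psi_2\rangle=-\langle\psi_1,(\sqrt{-1})^rY\cdot\psi_2\rangle$ for both $r=0$ and $r=1$. The two slots then combine into commutators: the symmetric (scalar) parts enter with opposite signs and cancel automatically, leaving $(-\sqrt{-1})^{r-1}(\sqrt{-1})^r\langle\psi,(a[\gamma_i,\gamma_j]+b\,\eta(e_j)[\gamma_i,\gamma_1])\psi\rangle$.

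Finally I would use that in three dimensions the commutator of two Gamma matrices is a complex multiple of the complementary one (the identity $\sigma_i\sigma_j=\delta_{ij}\,\mathrm{id}+\sqrt{-1}\,\epsilon_{ijk}\sigma_k$ rewritten for the $\gamma$'s): $[\gamma_i,\gamma_j]\in\mathbb{C}\gamma_k$ with $k$ the remaining index, so that the surviving expression is $\sum_k(\mathrm{coeff})_k\,(-\sqrt{-1})^{r-1}\langle\psi,\gamma_k\psi\rangle=\sum_k(\mathrm{coeff})_k\,J^k$, and the whole point of the computation is to check that $(\mathrm{coeff})_k=2a\,\epsilon_{jik}+2b\,\eta(e_j)\,\epsilon_{1ik}$, which is \eqref{eq:nabla J}. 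The one genuinely delicate part is this last bookkeeping of the powers of $\sqrt{-1}$ and of the signs through the Riemannian ($r=0$) and Lorentzian ($r=1$) cases — the Gamma matrices, the form $\langle\cdot,\cdot\rangle$, the raising of the index on $J$ and the convention for $\epsilon$ are arranged precisely so that the prefactors together with the coefficient coming from $[\gamma_i,\gamma_j]$ collapse to the real numbers $2a$ and $2b$; a handy consistency check is that the would-be scalar term must vanish, which is exactly the relation $e_j(J_j)=0$ implicit in \eqref{eq:derivative of current}.
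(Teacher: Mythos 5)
Your proposal is correct and follows essentially the same route as the paper's proof: substitute the SqK-equation into the derivative of $\langle\psi,\gamma_i\psi\rangle$, use the (anti-)self-adjointness of Clifford multiplication with respect to $\langle\cdot,\cdot\rangle$ to combine the two slots into commutators, and evaluate $[\gamma_i,\gamma_j]$ via the three-dimensional relation $\gamma_i\gamma_j=-\eta_{ij}-(-1)^{r-1}(\sqrt{-1})^{r-2}\sum_k\epsilon_{ijk}\gamma^k$. The only difference is organizational — you isolate $g(\nabla_{e_j}J_\psi,e_i)$ first and add the frame term $g(J_\psi,\nabla_{e_j}e_i)=-c_j\sum_k\epsilon_{jik}J^k$ afterwards, whereas the paper computes $e_j(J_i)$ with that term carried along — and your sign and $\sqrt{-1}$ bookkeeping for both $r=0$ and $r=1$ checks out.
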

	
	\begin{proof}
		By simple calculation, we have
		\begin{align}
			e_j(J_i)&=(-\sqrt{-1})^{r-1}\langle\nabla_{e_j}\psi,\gamma_i\psi\rangle +(-\sqrt{-1})^{r-1}\langle\psi,(\nabla_{e_j}\gamma_i)\psi\rangle +(-\sqrt{-1})^{r-1}\langle\psi,\gamma_i\nabla_{e_j}\psi\rangle\\
			&=(-\sqrt{-1})^{r-1}(-\sqrt{-1})^ra\langle \gamma_j\psi,\gamma_i\psi\rangle
			+a(-1)^{r-1}(\sqrt{-1})^{2r-1}\langle\psi,\gamma_i\gamma_j\psi\rangle\\
			&+ (-\sqrt{-1})^{r-1}(-\sqrt{-1})^rb\eta(e_j)\langle \gamma_1\psi,\gamma_i\psi\rangle
			+(-1)^{r-1}(\sqrt{-1})^{2r-1}b\eta(e_j)\langle\psi,\gamma_i\gamma_1\psi\rangle\\
			&+(-\sqrt{-1})^{r-1}\langle\psi,(\nabla_{e_j}e_i)\psi\rangle\\
			& =a\sqrt{-1}\langle\psi,-\gamma_j\gamma_i+\gamma_i\gamma_j\psi\rangle
			+b\sqrt{-1}\eta(e_j)\langle\psi,-\gamma_1\gamma_i+\gamma_i\gamma_1\psi\rangle\\
			&+(-\sqrt{-1})^{r-1}\langle\psi,(\nabla_{e_j}e_i)\psi\rangle,
		\end{align}
		and using $\gamma_i\gamma_j = -\eta_{ij}-(-1)^{r-1}(\sqrt{-1})^{r-2}\sum_k\epsilon_{ijk}\gamma^k {\rm\ and\ }\nabla_{e_i}e_j=-\sum_kc_i\epsilon_{ij}^{~~k}e_k$, we obtain
		\begin{align}
			e_j(J_i)&=2a(-\sqrt{-1})^{r-1}\epsilon_{jik}\langle\psi,\gamma^k\psi\rangle+2b\eta(e_j)(-\sqrt{-1})^{r-1}\epsilon_{1ik}\langle\psi,\gamma^k\psi\rangle\\
			&-(-\sqrt{-1})^{r-1}c_j\epsilon_{jik}\langle\psi,\gamma^k\psi\rangle\\
			&=\sum_k(2a\epsilon_{jik}+2b\eta(e_j) \epsilon_{1ik} -c_j\epsilon_{jik})J^k
		\end{align}
	\end{proof}
	
	Next, we clarify the necessary and sufficient condition for the Dirac current of a SqK-spinor to be a Killing vector.
	\begin{proposition}\label{pr:killing vector}
		Let $(M^{r,3-r},\phi,\xi,\eta,g)$ be a three-dimensional pseudo-Riemannian Sasakian space-form with $\phi$-sectional curvautre $H$, and let $\psi$ be a SqK-spinor but not a Killing spinor.
		Then $J_\psi$ is a Killing vector field if and only if $\psi$ satisfies $\xi\cdot\psi=\pm(-\sqrt{-1})^{r-1}\psi$.
		Furthermore, in this case, we have $\psi\in\mathcal{S}_0$ and $J_\psi$ is proportional to the Reeb vector field $\xi$.
	\end{proposition}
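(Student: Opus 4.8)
The plan is to read off the Killing condition for $J_\psi$ directly from Lemma~\ref{lm:derivative of current} and then translate the resulting pointwise condition on $J_\psi$ into an eigenvalue equation for $\psi$ under Clifford multiplication by $\xi$. First I would substitute \eqref{eq:nabla J} into the Killing equation $g(\nabla_{e_j}J_\psi,e_i)+g(\nabla_{e_i}J_\psi,e_j)=0$: the $a$-term is antisymmetric in $i,j$ and cancels, and since $\eta(e_j)=\delta_{1j}$ what is left is exactly $bJ^2=bJ^3=0$ (only the index pairs $(i,j)=(1,2),(1,3)$ give a non-trivial relation). An SqK-spinor determines its type uniquely, and ``$\psi$ is not a Killing spinor'' means precisely that its type $(a,b)$ has $b\neq 0$; hence $J_\psi$ is a Killing field if and only if $J^2=J^3=0$, i.e.\ $J_\psi=J^1\xi$.

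Next I would identify the condition $J^2=J^3=0$, equivalently $\langle\psi,\gamma_2\psi\rangle=\langle\psi,\gamma_3\psi\rangle=0$, with $\xi\cdot\psi=\pm(-\sqrt{-1})^{r-1}\psi$. This is a short computation in the given $2\times 2$ realization: writing $\psi$ in components, $\langle\psi,\gamma_2\psi\rangle=\langle\psi,\gamma_3\psi\rangle=0$ is equivalent to $|\psi_1|=|\psi_2|$ together with $\bar\psi_1\psi_2\in\mathbb{R}$, i.e.\ to $\psi_2=\pm\psi_1$, which in the same realization is precisely $\gamma_1\psi=\pm(-\sqrt{-1})^{r-1}\psi$ (consistently, $\gamma_1^2=-g(\xi,\xi)=(-1)^{r-1}$, so $\pm(-\sqrt{-1})^{r-1}$ are exactly the two eigenvalues of $\xi\cdot\,$). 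Combined with the previous step this yields the stated equivalence; the converse is then automatic, since once $J^2=J^3=0$ formula \eqref{eq:nabla J} collapses to $g(\nabla_{e_j}J_\psi,e_i)=2aJ^1\epsilon_{ji1}$, which is skew in $i,j$.

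For the ``furthermore'' part, suppose $\xi\cdot\psi=c\psi$ with $c=\pm(-\sqrt{-1})^{r-1}$. Applying $\nabla_X$ and using the SqK equation together with the Clifford identity $\xi\cdot X\cdot+X\cdot\xi\cdot=-2g(\xi,X)$ gives $(\nabla_X\xi)\cdot\psi=2a(\sqrt{-1})^r\bigl(c\,X\cdot\psi+g(\xi,X)\psi\bigr)$, the $b$-contribution dropping out because $\xi\cdot\psi=c\psi$. Evaluating at $X=e_2$, where the Sasakian connection gives $\nabla_{e_2}\xi=(-1)^re_3$, and then Clifford-multiplying on the left by $e_2$ and using $e_2\cdot e_2\cdot\psi=-\psi$ and $e_2\cdot e_3\cdot\psi=-(\sqrt{-1})^r\,\xi\cdot\psi$ (both consequences of the stated Clifford relations), one is forced to $(-1)^r=2a$, that is $a=\tfrac{(-1)^r}{2}$. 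Substituting $a^2=\tfrac{1}{4}$ and the space-form value $S=2H+4(-1)^r$ into \eqref{eq:S,a,b} with $n=1$ then gives $b=\tfrac{H-(-1)^r}{4}$, so $(a,b)$ is the type of $\mathcal S_0$ and $\psi\in\mathcal S_0$. Finally, putting $i=1$ in \eqref{eq:derivative of current}, the relations $J^2=J^3=0$ force $e_j(J_1)=0$ for all $j$, so $J^1$ is a constant and $J_\psi=J^1\xi$ is proportional to the Reeb field.

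The only non-routine point is the sign determination in the third paragraph: obtaining $a^2=\tfrac{1}{4}$ is easy (combine the $X=e_2$ and $X=e_3$ relations), but excluding $a=-\tfrac{(-1)^r}{2}$ requires the short left-multiplication argument above, or, alternatively, an appeal to the classification of SqK-spinor types on a space-form together with $b\neq 0$. Everything else is routine bookkeeping with Lemma~\ref{lm:derivative of current} and $2\times 2$ matrix algebra.
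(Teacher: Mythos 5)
Your proof is correct, and for the main equivalence it follows the paper's route exactly: symmetrize \eqref{eq:nabla J}, observe that the $a$-term drops out by antisymmetry, and reduce the Killing condition to $bJ_2=bJ_3=0$, hence (using $b\neq0$ from ``not a Killing spinor'') to $J_2=J_3=0$, which in the $2\times2$ realization is the eigenspinor condition $\xi\cdot\psi=\pm(-\sqrt{-1})^{r-1}\psi$ — a step the paper simply declares obvious and you verify explicitly. Where you genuinely diverge is the ``furthermore'' part: the paper gets $a=(-1)^r/2$ in one line from \eqref{eq:derivative of current}, namely $e_2(J_3)=(2a-(-1)^r)J^1$ together with $J_3\equiv0$ and $J^1\neq0$ (which holds because $\psi\neq0$ and $J_2=J_3=0$ force $J_1\neq0$), and then concludes $\psi\in\mathcal{S}_0$ from the classification. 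You instead differentiate the eigenvalue equation $\xi\cdot\psi=c\psi$, cancel the $b$-terms, and extract $a=(-1)^r/2$ from a pointwise Clifford identity at $X=e_2$, then recover $b=\frac{H-(-1)^r}{4}$ from the scalar-curvature constraint \eqref{eq:S,a,b}; I checked the signs ($\nabla_{e_2}\xi=(-1)^re_3$, $\gamma_2\gamma_3=-(\sqrt{-1})^r\gamma_1$) and they work out. Your version is longer but purely algebraic and does not pass through the derivative formula a second time; the paper's is shorter but leans on the already-established nonvanishing of $J_1$. Both are sound, and your explicit determination of $b$ via \eqref{eq:S,a,b} is a cleaner justification of $\psi\in\mathcal{S}_0$ than the paper's implicit appeal to the list of admissible types.
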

	\begin{proof}
		From \eqref{eq:nabla J}, we have
		\begin{align}
			g(\nabla_{e_1}J_\psi,e_2)+g(\nabla_{e_2}J_\psi,e_1)&=2bJ_3,\\
			g(\nabla_{e_1}J_\psi,e_3)+g(\nabla_{e_3}J_\psi,e_1)&=-2bJ_2,\\
			g(\nabla_{e_2}J_\psi,e_3)+g(\nabla_{e_3}J_\psi,e_2)&=0,
		\end{align}
		thus $J_\psi$ is a Killing vector field if and only if $J_2=J_3=0$ holds, and then it is proportional to $\xi$.
		It is obvious that $\psi$ is an eigen spinor of $\gamma_1$ if and only if $J_2=J_3=0$, and then the eigen values are $\pm(-\sqrt{-1})^{r-1}$.
		
		For a SqK-spinor of type $(a,b)$, from \eqref{eq:derivative of current}, we have $e_2(J_3) =(2a-(-1)^r)J^1$.
		If $J_\psi$ is a Killing vector field, then $J_1\ne0,J_2=J_3=0$, which implies $a=(-1)^r/2$, therefore $\psi\in\mathcal{S}_0$.
	\end{proof}
	
	\begin{remark}
		A SqK-spinor in $\mathcal{S}_0$ satisfies $\xi\cdot\psi=\pm(-\sqrt{-1})^{r-1}\psi$ is an example of a spinor that is not a Killing spinor and its Dirac current is a Killing vector field unless $M^{r,3-r}$ is a constant curvature space.
	\end{remark}
	
	Next, we show that the Dirac current of a SqK-spinor follows the equation of motion of a charged particle in the presence of a contact Maxwell field.
	A Maxwell field is given by a closed two-form.
	A charged particle with charge $q$ in the presence of a Maxwell field $F$ is governed by the following equation
	\begin{align}
		\nabla_{\dot c}\dot c=q\ {}^\sharp(\iota_{\dot c}F),
	\end{align}
	where $c:I\to M$ is an orbit of the charged particle.
	
	In \cite{Cabrerizo 2009}, on a contact metric manifold $(M,\phi,\xi,\eta,g)$, a contact magnetic field is defined as a magnetic field $F=d\eta$, that is, a representative of the gauge field is $\eta$.
	We call $F=d\eta$ a contact Maxwell field on a Riemannian or a Lorentzian contact metric manifold.
	
	\begin{proposition}\label{pr:magnetic curve}
		Let $(M^{r,3-r},\phi,\xi,\eta,g)$ be a three-dimensional pseudo-Riemannian Sasakian space-form with the $\phi$-sectional curvautre $H$, and $\psi$ be a SqK-spinor of type $(a,b),\ b\ne0$.
		Then $g(J_\psi,\xi)$ is constant along each integral curve of $J_\psi$.
		Furthermore,
		\begin{align}
			\nabla_{J_\psi}J_\psi=(-1)^{r-1}bg(J_\psi,\xi){}^\sharp(\iota_{J_\psi}d\eta),
		\end{align}
		holds, that is, each integral curve of $J_\psi$ is an orbit of a charged particle with charge $(-1)^{r-1}bg(J_\psi,\xi)$ in the presence of the contact Maxwell field $d\eta$ unless $g(J_\psi,\xi)\ne0$.
	\end{proposition}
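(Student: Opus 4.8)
\emph{Proof proposal.} The plan is to derive everything from Lemma~\ref{lm:derivative of current}; beyond that, only the Sasakian-frame structure is used.

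First I would show that $g(J_\psi,\xi)=J_1$ is a first integral of $J_\psi$. Since $J_\psi$ acts on the function $J_1$ as the directional derivative $\sum_j J^j\,e_j(J_1)$, I set $i=1$ in \eqref{eq:derivative of current}: the $b$-term drops out because $\epsilon_{11k}=0$, leaving $J_\psi(J_1)=\sum_{j,k}(2a-c_j)\,\epsilon_{j1k}\,J^jJ^k$. In this sum $\epsilon_{j1k}$ is antisymmetric under $j\leftrightarrow k$ while $J^jJ^k$ is symmetric, so the piece proportional to $2a$ vanishes; and the only indices $j$ that contribute are $j=2,3$, on which $c_j$ takes the single value $(-1)^r$, so the $c_j$-piece is $(-1)^r$ times the same antisymmetric sum and also vanishes. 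Hence $J_\psi\bigl(g(J_\psi,\xi)\bigr)=0$, which is the first assertion (and needs no hypothesis on $b$).

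Next I would compute both sides of the claimed identity in the Sasakian frame. On the left, $g(\nabla_{J_\psi}J_\psi,e_i)=\sum_j J^j\,g(\nabla_{e_j}J_\psi,e_i)$, and \eqref{eq:nabla J} splits this into an $a$-term $2a\sum_{j,k}\epsilon_{jik}J^jJ^k$, which vanishes by the same antisymmetry as in the first step, plus, since $\eta(e_j)=\delta_{j1}$, a $b$-term equal to $2b\,J^1\sum_k\epsilon_{1ik}J^k$. Reading off components, $\nabla_{J_\psi}J_\psi$ has no $\xi$-component and equals $2b\,J^1(J^3e_2-J^2e_3)$; using $g(\xi,\xi)=(-1)^r$, which in the Lorentzian case is forced by $\tilde g=g-2\eta\otimes\eta$, I rewrite $J^1=(-1)^r g(J_\psi,\xi)$. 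On the right, the Sasakian-frame relation $[e_2,e_3]=-2e_1$ together with $\eta(e_2)=\eta(e_3)=0$ gives $d\eta(e_2,e_3)=-\eta([e_2,e_3])=2$, while $\iota_\xi d\eta=0$, so raising indices yields ${}^\sharp(\iota_{J_\psi}d\eta)=-2(J^3e_2-J^2e_3)$. Comparing the two expressions gives $\nabla_{J_\psi}J_\psi=-(-1)^r b\,g(J_\psi,\xi)\,{}^\sharp(\iota_{J_\psi}d\eta)=(-1)^{r-1}b\,g(J_\psi,\xi)\,{}^\sharp(\iota_{J_\psi}d\eta)$. Matching this with $\nabla_{\dot c}\dot c=q\,{}^\sharp(\iota_{\dot c}F)$ for $F=d\eta$, and invoking the first step to see that $q=(-1)^{r-1}b\,g(J_\psi,\xi)$ is constant along $c$, one concludes that each integral curve of $J_\psi$ is an orbit of a charged particle of that charge in the contact Maxwell field $d\eta$; when $g(J_\psi,\xi)=0$ the curve is simply a geodesic.

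The computations are entirely elementary, so I expect the only real hazard to be the sign and index-raising bookkeeping peculiar to the pseudo-Riemannian case: carrying the factor $g(\xi,\xi)=(-1)^r$ through $J^1$, fixing the sign of $d\eta(e_2,e_3)$, and repackaging $-(-1)^r$ as $(-1)^{r-1}$. That is the one place where a slip could creep in.
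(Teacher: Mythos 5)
Your proposal is correct and follows essentially the same route as the paper: both parts are read off from Lemma~\ref{lm:derivative of current} in the Sasakian frame, with $d\eta=2\,\omega^2\wedge\omega^3$ used to identify the right-hand side, and your sign bookkeeping ($J^1=(-1)^r g(J_\psi,\xi)$, ${}^\sharp(\iota_{J_\psi}d\eta)=-2(J^3e_2-J^2e_3)$) lands on the stated formula. The only cosmetic difference is that you kill the $a$- and $c_j$-terms by the symmetry of $J^jJ^k$ against $\epsilon_{j1k}$, where the paper simply lists the three components explicitly.
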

	
	\begin{proof}
		Lemma \ref{lm:derivative of current} implies $ e_1(J_1)=0,\ e_2(J_1)=(-2a+(-1)^r)J_3,\ e_3(J_1)=(2a-(-1)^r)J_2$, and thus we have $ J_\psi(J_1)=J_2e_2(J_1)+J_3e_3(J_1)=0$.
		Then $ J_1$ is constant along each integral curve of $ J_\psi$.
		
		From \eqref{eq:nabla J}, we have $g(\nabla_{J_\psi}J_\psi,e_i)=2b\eta(J_\psi)\sum_k\epsilon_{1ik}J^k$, then
		\begin{align}
			g(\nabla_{J_\psi}J_\psi,e_1)=0,\ g(\nabla_{J_\psi}J_\psi,e_2)=(-1)^r2bJ_1J_3,\ g(\nabla_{J_\psi}J_\psi,e_3)=-(-1)^r2bJ_1J_2,
		\end{align}
		holds.
		Since we have $d\eta=2\omega^2\wedge\omega^3$, thus $\iota_{J_\psi}d\eta=2J_2\omega^3-2J_3\omega^2$ holds, it follows that
		\begin{align}
			\nabla_{J_\psi}J_\psi=(-1)^{r-1}bJ_1{}^\sharp(\iota_{J_\psi}d\eta).
		\end{align}
	\end{proof}
	
	\begin{corollary}
		In the same settings of Proposition \ref{pr:magnetic curve}, if $J_\psi$ is orthogonal to $\xi$ at some point, then it always be, and in this case the integral curve of $J_\psi$ is a geodesic.
	\end{corollary}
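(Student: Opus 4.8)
The plan is to obtain the corollary directly from the two conclusions of Proposition~\ref{pr:magnetic curve}, so that essentially no new computation is needed. Keep the abbreviation $J_1 = g(J_\psi,\xi)$. Proposition~\ref{pr:magnetic curve} records, first, that $J_1$ is constant along every integral curve of $J_\psi$ and, second, the identity
\begin{align*}
	\nabla_{J_\psi}J_\psi = (-1)^{r-1}\,b\,J_1\,{}^\sharp(\iota_{J_\psi}d\eta).
\end{align*}

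First I would fix an integral curve $c\colon I\to M$ of $J_\psi$, so that $\dot c = J_\psi\circ c$, and assume $J_1$ vanishes at one point of $c$. Constancy of $J_1$ along $c$ then forces $J_1\equiv 0$ along all of $c$; this is precisely the statement that orthogonality of $J_\psi$ and $\xi$, once it holds at a point of the curve, holds at every point of it. Substituting $J_1\equiv 0$ into the displayed identity gives $\nabla_{\dot c}\dot c = \nabla_{J_\psi}J_\psi = 0$ along $c$, so $c$ is a geodesic, which is the remaining claim.

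I do not expect a genuine obstacle: the statement is an immediate corollary. The only point to watch is the scope of the assertion that orthogonality, once it occurs, holds ``always'': this must be read along the given integral curve of $J_\psi$, not globally on $M$. A global version is in fact false for general $a$, since by \eqref{eq:derivative of current} one has $d J_1 = ((-1)^r-2a)(J_3\,\omega^2 - J_2\,\omega^3)$, which is generically non-zero on the locus $\{J_1 = 0\}$, so $J_1$ need not vanish on any open subset of $M$. Hence all the content lies in the propagation of the vanishing along the flow of $J_\psi$, which Proposition~\ref{pr:magnetic curve} already supplies.
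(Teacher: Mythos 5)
Your argument is correct and is exactly the intended one: the paper states this as an immediate corollary of Proposition~\ref{pr:magnetic curve} without a separate proof, relying precisely on the constancy of $g(J_\psi,\xi)$ along integral curves together with the displayed formula for $\nabla_{J_\psi}J_\psi$. Your additional remark that the persistence of orthogonality should be read along the integral curve rather than globally is a sensible clarification consistent with the proposition's first conclusion.
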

	
	Finally, we clarify a physical relation between a Dirac-like current and a Maxwell field defined by SqK-spinors.
	An electric current vector field $J$ generates a Maxwell field $F$ according to the Maxwell equation
	\begin{align}
		*d* F=(-1)^{r-1}\ {}^\flat J.
	\end{align}
	We can construct the solutions to the Maxwell equation using SqK-spinors.

	\begin{proposition}\label{pr:maxwell with source}
		Let $(M^{r,3-r},\phi,\xi,\eta,g)$ be a three-dimensional pseudo-Riemannian Sasakian space-form with the $\phi$-sectional curvautre $H$, and let $\psi_1$ and $\psi_2$ be SqK-spinors.
		We define a complex two-form $F_{\psi_1, \psi_2}$ as
		\begin{align}
			F_{\psi_1,\psi_2}=\frac{1}{2}\sum_{i,j}\sqrt{-1}\langle\psi_1,[\gamma_i,\gamma_j]\psi_2\rangle\omega^i\wedge\omega^j,
		\end{align}
		and define a Dirac-like complex current $J_{\psi_1,\psi_2}=\sum_i(-\sqrt{-1})^{r-1}\langle\psi_1, \gamma^i\psi_2\rangle e_i$.
		Then $dF_{\psi_1,\psi_2}=0$ holds if and only if 
		the triple $(H,\psi_1,\psi_2)$ is one of the cases in the following table\ref{tb:F closed}.

\begin{table}[H]
			\caption{The value of $H$ and $r$ for $F_{\psi_1,\psi_2}$ to be closed. }
   \label{tb:F closed}
			\centering
\begin{tabular}{|c|c|c|c|}
\hline
 & $\psi_1\in\mathcal{S}_0$ & $\psi_1\in\mathcal{S}_+$ & $\psi_1\in\mathcal{S}_-$  \\
\hline
$\psi_2\in\mathcal{S}_0$ & Any $H\in\mathbb{R},\ r\in\{0,1\}$ & $H=13,r=0$ & $H=-13,r=1$ \\
\hline
$\psi_2\in\mathcal{S}_+$ & $H=13, r=0$ & $(-1)^{r+1}H<3$ & None \\
\hline
$\psi_2\in\mathcal{S}_-$ & $H=-13, r=1$ & None &  $(-1)^{r+1}H<3$\\
\hline				
\end{tabular}
\end{table}

		And then, $F_{\psi_1,\psi_2}$ and $J_{\psi_1,\psi_2}$ satisfy the equation $*d*F_{\psi_1,\psi_2}=(-1)^{r-1}c\ {}^\flat J_{\psi_1,\psi_2}$ for some $c\in\mathbb{R}$ if and only if one of the following case holds:\\
		(i) If $\psi_1,\psi_2\in\mathcal{S}_0$ and $\psi_1$ is proportional to $\psi_2$, and both are eigen-spinors of $\xi$, where $H$ is arbitrary, then $c=4$ holds.\\
		(ii) If $H=(-1)^r,\ \psi_1,\psi_2\in\mathcal{S}_0$ holds, that is Killing spinors, then $c=4$ holds.\\
		(iii) If $H=13,r=0$ and $\psi_1\in S_0,\psi_2\in S_+$ or $\psi_1\in S_+,\psi_2\in S_0$ holds, then $c=12$ holds.\\
		(iv) If $H=-13,r=1$ and $\psi_1\in S_0,\psi_2\in S_-$ or $\psi_1\in S_-,\psi_2\in S_0$ holds, then $c=12$ holds.\\
		(v) If $ \psi_1,\psi_2\in \mathcal{S}_0$ and   $\langle\psi_1,\gamma_1\psi_2\rangle=0$ holds, where $H$ is arbitrary, then $c=3+(-1)^rH$ holds.\\
		
	\end{proposition}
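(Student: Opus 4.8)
The proposition follows from one observation — that $F_{\psi_1,\psi_2}$ is, up to a universal constant, the Hodge dual of ${}^\flat J_{\psi_1,\psi_2}$ — together with a two–spinor refinement of Lemma~\ref{lm:derivative of current}.

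\emph{Step 1.} A short computation using $[\gamma_i,\gamma_j]=-2(-1)^{r-1}(\sqrt{-1})^{r-2}\sum_k\epsilon_{ijk}\gamma^k$ and the Hodge star gives $F_{\psi_1,\psi_2}=-2\,{*}\,{}^\flat J_{\psi_1,\psi_2}$. Hence $dF_{\psi_1,\psi_2}=-2(\operatorname{div}J_{\psi_1,\psi_2})\,\mathrm{vol}$, so closedness is equivalent to $\operatorname{div}J_{\psi_1,\psi_2}\equiv 0$; and, since $*{*}$ is a constant on one–forms in dimension three, the source equation $*d{*}F_{\psi_1,\psi_2}=(-1)^{r-1}c\,{}^\flat J_{\psi_1,\psi_2}$ is equivalent to a Beltrami–type relation $*d\,{}^\flat J_{\psi_1,\psi_2}=\lambda\,{}^\flat J_{\psi_1,\psi_2}$, with $c=2\lambda$.

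\emph{Step 2.} Redoing the computation in the proof of Lemma~\ref{lm:derivative of current} for $\psi_1$ of type $(a_1,b_1)$ and $\psi_2$ of type $(a_2,b_2)$, the only new feature is that the symmetric part of $-a_1\gamma_j\gamma_i+a_2\gamma_i\gamma_j$ now survives; writing $\rho:=\langle\psi_1,\psi_2\rangle$ and $J=J_{\psi_1,\psi_2}$,
\begin{align*}
g(\nabla_{e_j}J,e_i)={}&\sqrt{-1}\,(a_1-a_2)\,\eta_{ij}\,\rho+\sqrt{-1}\,(b_1-b_2)\,\eta(e_j)\,\eta_{i1}\,\rho\\
&+\sum_k\bigl((a_1+a_2)\epsilon_{jik}+(b_1+b_2)\eta(e_j)\epsilon_{1ik}\bigr)J^k ,
\end{align*}
and $\rho$ is no longer constant, $e_j\rho=(\sqrt{-1})^r\bigl((a_2-a_1)\langle\psi_1,\gamma_j\psi_2\rangle+(b_2-b_1)\eta(e_j)\langle\psi_1,\gamma_1\psi_2\rangle\bigr)$. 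Taking the trace (keeping the index–raising factors, which is where one must be careful in the Lorentzian case) gives $\operatorname{div}J=\sqrt{-1}\bigl(3(a_1-a_2)+(b_1-b_2)\bigr)\rho$; antisymmetrizing gives $d\,{}^\flat J=2(a_1+a_2)J^1\,{*\omega^1}+\bigl(2(a_1+a_2)+(b_1+b_2)\bigr)\bigl(J^2\,{*\omega^2}+J^3\,{*\omega^3}\bigr)$, the $\rho$–terms dropping out by symmetry. For $\psi_1=\psi_2$ these collapse to Lemma~\ref{lm:derivative of current} and $\rho$ becomes constant.

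\emph{Step 3: casework.} Insert the explicit $(a,b)$ from the existence results. For closedness: same–type pairs have $a_1=a_2$, $b_1=b_2$, so $\operatorname{div}J\equiv 0$ automatically (and one only recalls that $\mathcal S_\pm\ne\emptyset$ precisely when $(-1)^{r+1}H<3$) — the diagonal boxes of Table~\ref{tb:F closed}; for cross–type pairs, $3(a_1-a_2)+(b_1-b_2)$, once the square root $\sqrt{3+(-1)^rH}$ is cleared, vanishes only at $H=13$, $r=0$ for the $\mathcal S_0$–$\mathcal S_+$ pairs, only at $H=-13$, $r=1$ for the $\mathcal S_0$–$\mathcal S_-$ pairs, and never for $\mathcal S_+$–$\mathcal S_-$, \emph{provided} $\rho\not\equiv 0$. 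For the source equation, the displayed $d\,{}^\flat J$ is proportional to $*\,{}^\flat J$ exactly when $b_1+b_2=0$, or $J^1\equiv 0$, or $J^2\equiv J^3\equiv 0$: the first holds for $\mathcal S_0$–$\mathcal S_0$ at $H=(-1)^r$ and for $\mathcal S_0$–$\mathcal S_+$ at $(H,r)=(13,0)$, $\mathcal S_0$–$\mathcal S_-$ at $(H,r)=(-13,1)$ (cases (ii),(iii),(iv)); the third forces, by Proposition~\ref{pr:killing vector}, both spinors into $\mathcal S_0$ as proportional $\xi$–eigenspinors with $J\parallel\xi$, so $d\,{}^\flat J\propto d\eta$ (case (i)); and the second, with $\psi_1,\psi_2\in\mathcal S_0$, is case (v). Reading off $c=2\lambda$ gives $c=4$ in (i),(ii), $c=12$ in (iii),(iv), and $c=3+(-1)^rH$ in (v).

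\emph{Main obstacle.} The one non-routine point is ruling out $\rho\equiv 0$ — and, similarly, $J^1\equiv 0$ and $J^2\equiv J^3\equiv 0$ — for the cross–type pairs, since this is exactly what forces the values $\pm13$ and kills the $\mathcal S_+$–$\mathcal S_-$ boxes. If $\rho\equiv 0$, differentiating and using $a_1\ne a_2$ forces $\langle\psi_1,\gamma_2\psi_2\rangle\equiv\langle\psi_1,\gamma_3\psi_2\rangle\equiv 0$, and also $\langle\psi_1,\gamma_1\psi_2\rangle\equiv 0$ whenever $a_1+b_1\ne a_2+b_2$; then $\langle\psi_1,M\psi_2\rangle\equiv 0$ for all $M$, i.e. $\psi_2\psi_1^{\dagger}(\gamma_1)^r=0$, contradicting $\psi_1,\psi_2\not\equiv 0$. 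In the remaining case $a_1+b_1=a_2+b_2$ (which is exactly the $\mathcal S_+$–$\mathcal S_-$ situation) one instead notes that $\psi_2\psi_1^{\dagger}(\gamma_1)^r$ would have to be a nonzero scalar multiple of $\gamma_1$, which is impossible since it has rank $\le 1$ while $\gamma_1$ is invertible; the same arguments, together with Proposition~\ref{pr:killing vector}, supply the $J^k\not\equiv 0$ statements needed in the source–equation casework. Alternatively one can bypass all of this by evaluating the bilinears directly on the closed–form spinors of Section~\ref{sec:Explicit formula of SqK-spinors}.
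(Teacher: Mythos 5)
Your proposal is correct, and its computational core is the same as the paper's: both reduce the problem to differentiating the spinor bilinears $\langle\psi_1,\gamma_k\psi_2\rangle$ via the SqK equation and the Clifford relations, arriving at $dF\propto\bigl(3(a_1-a_2)+(b_1-b_2)\bigr)\langle\psi_1,\psi_2\rangle\,\omega^1\wedge\omega^2\wedge\omega^3$ and at a codifferential of the form $-\bigl(4(a_1+a_2)+2(b_1+b_2)\bigr){}^\flat J+2(b_1+b_2)g(J,\xi)\,\omega^1$, followed by the same three-way case split ($b_1+b_2=0$, or $g(J,\xi)\equiv0$, or $J\parallel\xi$) and the same numerology for $c$. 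The differences are organizational and one of substance. Organizationally, you route everything through $J$ by first observing $F_{\psi_1,\psi_2}=-2\,{*}\,{}^\flat J_{\psi_1,\psi_2}$ and recasting the source equation as a Beltrami-type condition $*d\,{}^\flat J=\lambda\,{}^\flat J$, whereas the paper differentiates the components of $F$ directly; this buys a cleaner statement (closedness $\Leftrightarrow$ $\operatorname{div}J=0$) but is the same calculation. The substantive difference is your treatment of the degenerate cases: to get the entries $H=\pm13$ and the ``None'' boxes, and to justify the exhaustiveness of cases (i)--(v), one must know that $\langle\psi_1,\psi_2\rangle$ and the components $\langle\psi_1,\gamma_k\psi_2\rangle$ do not vanish identically for cross-type pairs. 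The paper disposes of this by pointing to the explicit formulas of Section~\ref{sec:Explicit formula of SqK-spinors}; you instead give an intrinsic algebraic argument (differentiate the vanishing bilinear to kill the remaining components, then note that $\psi_2\psi_1^{\dagger}(\gamma_1)^r$ has rank at most one and so cannot be a nonzero multiple of the invertible $\gamma_1$, using $a_++b_+=a_-+b_-$ to isolate the exceptional $\mathcal S_+$--$\mathcal S_-$ case). That argument is correct and self-contained, and is arguably an improvement on the paper's appeal to closed-form solutions; your fallback to the explicit formulas coincides with what the paper actually does.
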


	\begin{proof}
 Let $\psi_1$ and $\psi_2$ be SqK-spinors of type $(a_1,b_1)$ and $(a_2,b_2)$ respectively.
		Since $[\gamma_i,\gamma_j]= -2(-\sqrt{-1})^r\epsilon_{ij}^{~~k}\gamma_k$ holds, we have
		\begin{align}
			F= (-\sqrt{-1})^{r+1}\epsilon_{ij}^{~~k}\langle \psi_1, \gamma_k\psi_2 \rangle \omega^{i}\wedge\omega^j.
		\end{align}
		Then $dF$ is given by
		\begin{align}
			dF &= (-\sqrt{-1})^{r+1}\epsilon_{ij}^{~~k}\{ \langle \nabla_l\psi_1, \gamma_k\psi_2 \rangle + \langle \psi_1, \nabla_l(\gamma_k\psi_2) \rangle \} \omega^l\wedge\omega^{i}\wedge\omega^j\\
			&= (-\sqrt{-1})^{r+1}\epsilon_{ij}^{~~k}\epsilon^{lij}\{ \langle \nabla_l\psi_1, \gamma_k\psi_2 \rangle + \langle \psi_1, \nabla_l(\gamma_k\psi_2) \rangle \} \omega^1\wedge\omega^2\wedge\omega^3\\
			&= 2(-\sqrt{-1})^{r+1}\eta^{kl}\{ \langle \nabla_l\psi_1, \gamma_k\psi_2 \rangle + \langle \psi_1, \nabla_l(\gamma_k\psi_2) \rangle \} \omega^1\wedge\omega^2\wedge\omega^3\\
			&= 2(-\sqrt{-1})^{r+1}\eta^{kl}( \langle (\sqrt{-1})^r(a_1\gamma_l+b_1\delta_{1l}\gamma_1)\psi_1, \gamma_k\psi_2 \rangle\\
			&+ \langle \psi_1, \gamma_k(\sqrt{-1})^r(a_2\gamma_l+b_2\delta_{1l}\gamma_1)\psi_2) \rangle )\omega^1\wedge\omega^2\wedge\omega^3\\
			&= 2(-1)^{r+1}\sqrt{-1}( a_1((-1)^r\langle \gamma_1\psi_1, \gamma_1\psi_2 \rangle + \langle \gamma_2\psi_1, \gamma_2\psi_2 \rangle \\
			&+ \langle \gamma_3\psi_1, \gamma_3\psi_2 \rangle) + (-1)^rb_1\langle \gamma_1\psi_1, \gamma_1\psi_2 \rangle ) \omega^1\wedge\omega^2\wedge\omega^3\\
			& -2\sqrt{-1}( a_2((-1)^r\langle \psi_1, (\gamma_1)^2\psi_2 \rangle + \langle \psi_1, (\gamma_2)^2\psi_2 \rangle \\
			&+ \langle \psi_1, (\gamma_3)^2\psi_2 \rangle) + (-1)^rb_2\langle \psi_1, (\gamma_1)^2\psi_2 \rangle )\omega^1\wedge\omega^2\wedge\omega^3\\
			&= 2\sqrt{-1}\{ -3a_1 +3a_2 - b_1 + b_2\}\langle\psi_1,\psi_2 \rangle\omega^1\wedge\omega^2\wedge\omega^3.
		\end{align}

		Therefore $dF=0$ holds if and only if the triple $(H,\psi_1,\psi_2)$ is in the table \ref{tb:F closed}.
		Furthermore we have
		\begin{align}
			\nabla_iF_{jk}&=\sqrt{-1}\langle\nabla_i\psi_1,[\gamma_j,\gamma_k]\psi_2\rangle+\sqrt{-1}\langle\psi_1,[\gamma_j,\gamma_k]\nabla_i\psi_2\rangle \\
			&=(\sqrt{-1})^{r-1}\langle a_1\gamma_i\psi_1+b_1\delta_{i1}\gamma_1\psi_1,[\gamma_j,\gamma_k]\psi_2\rangle\\
			&-(\sqrt{-1})^{r-1}\langle\psi_1,[\gamma_j,\gamma_k](a_2\gamma_i\psi_2+b_2\delta_{i1}\gamma_1\psi_2)\rangle \\
			&=a_1(\sqrt{-1})^{r-1}\langle \gamma_i\psi_1,[\gamma_j,\gamma_k]\psi_2\rangle+\delta_{i1}b_1(\sqrt{-1})^{r-1}\langle \gamma_1\psi_1,[\gamma_j,\gamma_k]\psi_2\rangle\\
			&-a_2(\sqrt{-1})^{r-1}\langle\psi_1,[\gamma_j,\gamma_k]\gamma_i\psi_2\rangle-b_2\delta_{i1}(\sqrt{-1})^{r-1}\langle\psi_1,[\gamma_j,\gamma_k]\gamma_1\psi_2\rangle.
		\end{align}

		Using $ \sum_i(\gamma^i\gamma_i\gamma_k-\gamma^i\gamma_k\gamma_i)=-4\gamma_k,\  \gamma_1\gamma^1\gamma_k-\gamma_1\gamma_k\gamma^1=-2\gamma_k+2\delta_{1k}\gamma_1$, we obtain
		\begin{align}
			\sum_i\nabla^iF_{ik}&=(a_1+a_2)(\sqrt{-1})^{r-1}\langle \psi_1,\gamma^i[\gamma_i,\gamma_k]\psi_2\rangle+(b_1+b_2)(\sqrt{-1})^{r-1}\langle \psi_1,\gamma_1[\gamma^1,\gamma_k]\psi_2\rangle \\
			&=-(4(a_1+a_2)+2(b_1+b_2))(\sqrt{-1})^{r-1}\langle\psi_1,\gamma_k\psi_2\rangle+2(b_1+b_2)\delta_{1k}(\sqrt{-1})^{r-1}\langle\psi_1,\gamma_1\psi_2\rangle\\
			&=(-1)^r(4(a_1+a_2)+2(b_1+b_2))J_k+(-1)^{r-1}2(b_1+b_2)\delta_{1k}J_1.
		\end{align}
		Therefore we obtain
		\begin{align}
			*d*F&=(-1)^{r-1}\nabla^iF_{ik}\omega^k\\
			&=-(4(a_1+a_2)+2(b_1+b_2)){}^\flat J_{\psi_1,\psi_2}+2(b_1+b_2)g(J_{\psi_1,\psi_2},\xi)\omega^1.
		\end{align}
		
		We consider the conditions in which $*d*F_{\psi_1,\psi_2}=(-1)^{r-1}c\ {}^\flat J_{\psi_1,\psi_2}$ holds for some $c\in\mathbb{R}$.
		We can assume $J_{\psi_1,\psi_2}\ne0$ since $\psi_1,\psi_2$ are not zero-spinor.
		For $ \psi_1\in \mathcal{S}_0,\psi_2\in \mathcal{S}_\pm$, the quadratic $ \langle\psi_1,\gamma_i\psi_2\rangle$ is not identically zero because of an explicit formula (see section \ref{sec:Explicit formula of SqK-spinors}), then it is not possible that $ J_{\psi_1,\psi_2}$ is proportional to $ \xi$ or orthogonal to $\xi$ on the whole of $M$.
		As is the same for the case of $ \psi_1\in \mathcal{S}_\pm,\psi_2\in \mathcal{S}_0$ or the case of $\psi_1, \psi_2\in\mathcal{S}_\pm$.
		
		If $ J_{\psi_1,\psi_2}$ is proportional to $ \xi$, we have $ \psi_1, \psi_2\in S_0$ and $ \langle\psi_1,\gamma_2\psi_2\rangle=\langle\psi_1,\gamma_3\psi_2\rangle=0$, which is equivalent to that the both $\psi_1$ and $ \psi_2 $ are proportional to the eigen-spinors of $\xi$, this is the case (i), and then we have
		\begin{align}
			*d*F_{\psi_1,\psi_2}=4(-1)^{r-1}\ {}^\flat J_{\psi_1,\psi_2}.\label{eq:maxwell with source (i)}
		\end{align}
		
		If $ J_{\psi_1,\psi_2}$ is not proportional to $ \xi$, we have $(b_1+b_2)g(J_{\psi_1,\psi_2},\xi)=0$.
		If $b_1+b_2=0$ holds, it is possible for the cases (ii) $H=(-1)^r$, (iii) $r=0, H=13, \psi_1\in \mathcal{S}_0,\psi_2\in \mathcal{S}_+$ or $ \psi_1\in \mathcal{S}_+,\psi_2\in \mathcal{S}_0$, or (iv) $r=1,H=-13, \psi_1\in \mathcal{S}_0,\psi_2\in \mathcal{S}_-$ or $ \psi_1\in \mathcal{S}_-,\psi_2\in \mathcal{S}_0$.
		In the case (ii), we have \eqref{eq:maxwell with source (i)}, and in the cases (iii),(iv), we have
		\begin{align}
			*d*F_{\psi_1,\psi_2}=(-1)^{r-1}12\ {}^\flat J_{\psi_1,\psi_2}.
		\end{align}
		
		If $b_1+b_2\ne0$ holds, then we have $ g(J_{\psi_1,\psi_2},\xi)=0$, which implies (v) $ \psi_1,\psi_2\in \mathcal{S}_0$ and $ \langle\psi_1,\gamma_1\psi_2\rangle \equiv 0$, and then we have
		\begin{align}
			*d*F_{\psi_1,\psi_2}=(-1)^{r-1}(3+(-1)^rH)\ {}^\flat J_{\psi_1,\psi_2}.
		\end{align}
		
	\end{proof}
	
	\begin{remark}
		According to the above proposition, $\Re (F_{\psi_1,\psi_2})$ and $\Re (J_{\psi_1,\psi_2})$ constitute a solution to the Maxwell system.
		In particular, on the Riemannian or Lorentzian Nill manifold, the two-form $\Re (F_{\psi_1,\psi_2})$ is a vacuum Maxwell field.
	\end{remark}
	
	\section{Solutions to DM, ED and EDM systems}\label{sec:Solutions to DM, ED and EDM systems}
	In this section, we discuss properties of SqK-spinors from the viewpoint of general relativity.
	We construct solutions to Einstein-Dirac (ED) system and Einstein-Dirac-Maxwell (EDM) system using SqK-spinors.
	
	On an $n$-dimensional pseudo-Riemannian spin-c manifold $(M^{r,n-r},g)$, let $\psi$ be a spinor and $\sqrt{-1}A$ be a $U(1)$-gauge field.
	We denote by $\nabla^c:=\nabla+q\sqrt{-1}A$ a spin-c connection, and the Dirac operator is defined as 
	\begin{align}
	D^c=(\sqrt{-1})^r\gamma^i\nabla^c_{e_i},    
	\end{align}
	where $\{e_i\}$ is an orthogonal frame.
	For a triple $(\psi,A,g)$, the Lagrangian density of an EDM system is given by 
	\begin{align}
&\mathcal{L}_{EDM}=\mathcal{L}_{Ein}+\mathcal{L}_{Dirac}+2\mathcal{L}_{EM}\label{eq:Lagrangian E-D-M}\\
&\mathcal{L}_{Ein}=S-2\Lambda,\ 
\mathcal{L}_{Dirac}=-\Re\langle\psi,(D^c-\lambda)\psi\rangle,\ 
\mathcal{L}_{EM}=-\frac{1}{4}||F||^2
	\end{align}
	where $S$ is the scalar curvature of $M$, $F=dA$ is the Maxwell field, $\Lambda$ is a cosmological constant and $\lambda$ is a real constant.
	
	Then the equations of the fields are given by
	\begin{align}
		&D^c\psi=\lambda\psi,\label{eq:dirac}\\
		&*d*F=q(-1)^{n+1}\ {}^\flat J_\psi,\label{eq:maxell}\\
		&J_\psi=(-\sqrt{-1})^{r-1}\sum_i\langle\psi,\gamma^i\psi\rangle e_i,\\
		&{\rm Ric}=T-\frac{1}{n-2}\tr(T)g+\frac{2}{n-2}\Lambda g,\label{eq:einstein}\\
		&T=T^{em}+T^{spin},\\
		&T^{em}(X,Y)=g^*(\iota_XF,\iota_YF)-\frac{1}{4}||F||^2g(X,Y),\\
		&T^{spin}(X,Y)=\frac{1}{4}\Re\langle\psi,(\sqrt{-1})^r(X\nabla^c_Y\psi+Y\nabla^c_X\psi)\rangle.
	\end{align}
	The Lagrangian \eqref{eq:Lagrangian E-D-M} with $A=0$ describes a ED system, and then equations of the fields are \eqref{eq:dirac} and \eqref{eq:einstein}.
	The Lagrangian with terms of scalar curvature $S$ and cosmological constant $-2\Lambda$ removed from Lagrangian\eqref{eq:Lagrangian E-D-M} describes a Dirac-Maxwell (DM) system, and then equations of the fields are \eqref{eq:dirac} and \eqref{eq:maxell}.
	
	In \cite{Kim Friedrich 2000},\cite{Kim 2006}, a weak Killing spinor is defined as the following.
	
	\begin{definition}\label{def:wk}
		Let $(M^{r,n-r},g)$ be an n-dimensional pseudo-Riemannian spin manifold.
		Let $\Lambda$ be a real number and assume that $(n-2)S - 2n\Lambda$ does not vanish at any point of $ (M,g)$ for $ n \ge 3$.
		A spinor $ \psi\ne0$ is called a weak Killing spinor (WK-spinor) with WK-number $(\sqrt{-1})^{3r}\lambda$ where, $ 0\neq \lambda \in \mathbb{R} $ if $ \psi $ satisfies the following
		\begin{align}
			\nabla_X\psi = (\sqrt{-1})^{3r}\beta(X)\cdot\psi+n\alpha(X)\psi+X\cdot\alpha\cdot\psi,
		\end{align}
		where $ \alpha$ is a one-form and $ \beta$ is a symmetric (1,1)-tensor field defind by
		\begin{align}
			\alpha = \frac{(n-2)dS}{2(n-1)\{(n-2)S-2n\Lambda\}},\ 
			\beta = \frac{2\lambda}{(n-2)S - 2n\Lambda}\left\{Ric-\frac{1}{2}Sg+\Lambda g\right\}.
		\end{align}
	\end{definition}
	
	They proved the theorem which states the relationship between the ED system and WK-spinor.
	We summarize the result briefly.
	The key lemma is as follows.
	
	\begin{lemma}\label{lm:WK-spninor}
		Let $\psi$ be a non-null spinor field on an n-dimensional pseudo-Riemannian spin manifold $(M^{r,n-r},g)$ such that 
		\begin{equation}
			\nabla_X\psi=n\alpha(X)\psi+(\sqrt{-1})^{3r}\beta(X)\cdot\psi +X\cdot\alpha\cdot\psi \label{eq:wk-spinor 1}
		\end{equation}
		holds for a one-form $\alpha$ and a symmetric $(1,1)$-tensor field $\beta$ for all vector fields $X$.
		Then $\alpha$ and $\beta$ are uniquely determined by the spinor field $\psi$ via the relations
		\begin{equation}
			\alpha  =\frac{d\langle\psi,\psi\rangle}{2(n-1)\langle\psi,\psi\rangle},\quad
			\beta = -\frac{2T^{spin}}{\langle\psi,\psi\rangle}.    \label{eq:wk-spinor 2}
		\end{equation}
		Furthermore, in the case of $n=3$, any non-null spinor satisfies \eqref{eq:wk-spinor 1},\eqref{eq:wk-spinor 2}.
	\end{lemma}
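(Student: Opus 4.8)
The statement has two parts — uniqueness of $(\alpha,\beta)$ in arbitrary dimension and existence (with the explicit form) when $n=3$ — and I would prove them separately.

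For the uniqueness the only ingredients are the behaviour of Clifford multiplication under $\langle\cdot,\cdot\rangle$: $(\sqrt{-1})^{r}X\cdot$ is skew-adjoint, hence so is $(\sqrt{-1})^{3r}X\cdot=(-1)^{r}(\sqrt{-1})^{r}X\cdot$, and the adjoint of $X\cdot Y\cdot$ is $Y\cdot X\cdot$. First I would differentiate the real function $\langle\psi,\psi\rangle$: inserting \eqref{eq:wk-spinor 1} into $X\langle\psi,\psi\rangle=\langle\nabla_X\psi,\psi\rangle+\langle\psi,\nabla_X\psi\rangle$, the $\beta$-contribution cancels by skew-adjointness, the term $n\alpha(X)\psi$ gives $2n\alpha(X)\langle\psi,\psi\rangle$, and the term $X\cdot\alpha\cdot\psi$ gives $-2\alpha(X)\langle\psi,\psi\rangle$ because $X\cdot\alpha\cdot+\alpha\cdot X\cdot=-2g(X,\alpha^\sharp)$; hence $X\langle\psi,\psi\rangle=2(n-1)\alpha(X)\langle\psi,\psi\rangle$, which is the claimed formula, the non-nullity of $\psi$ being exactly what makes the division legitimate.

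Then I would substitute \eqref{eq:wk-spinor 1} into $T^{spin}$ (here $\nabla^c=\nabla$ since there is no gauge field). After symmetrizing in $X$ and $Y$, the $\psi$-linear part of $\nabla\psi$ produces terms $(\sqrt{-1})^{r}\langle\psi,V\cdot\psi\rangle$, which are purely imaginary by skew-adjointness and so disappear under $\Re$; the contribution through $X\cdot Y\cdot\alpha\cdot\psi$, after symmetrizing, is proportional to $g(X,Y)(\sqrt{-1})^{r}\langle\psi,\alpha\cdot\psi\rangle$, again purely imaginary; only the $\beta$-term survives, and using $(\sqrt{-1})^{r}(\sqrt{-1})^{3r}=1$ together with the adjoint of $X\cdot\beta(Y)\cdot$ one gets $4T^{spin}(X,Y)=-2\langle\psi,\psi\rangle\,g(\beta(X),Y)$, i.e. the claimed formula for $\beta$ (the symmetry of $\beta$ being what lets one read it off directly as a symmetric tensor).

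For $n=3$ I would use that the spinor bundle has complex rank two. For non-null $\psi$ the real-linear map $V\mapsto(\sqrt{-1})^{3r}V\cdot\psi$ on $T_pM$ is injective — a short separate check is needed for null $V$ in the Lorentzian case, where one uses $\langle\psi,\psi\rangle\neq0$ — so its image is the $3$-dimensional real hyperplane $\{\chi:\Re\langle\psi,\chi\rangle=0\}$, and, since $\Re\langle\psi,\psi\rangle\neq0$, the set $\{\psi,\gamma_1\psi,\gamma_2\psi,\gamma_3\psi\}$ is a real frame of $\Sigma$. I would then set $\alpha,\beta$ as in \eqref{eq:wk-spinor 2}, form $\Delta(X):=\nabla_X\psi-3\alpha(X)\psi-(\sqrt{-1})^{3r}\beta(X)\cdot\psi-X\cdot\alpha\cdot\psi$, and note that the two uniqueness computations run backwards give $\Re\langle\psi,\Delta(X)\rangle=0$ and that the $X\!\leftrightarrow\! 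Y$–symmetrised part of $\Re\langle\psi,Y\cdot\Delta(X)\rangle$ vanishes. By the frame property $\Delta(X)=(\sqrt{-1})^{3r}\mu(X)\cdot\psi$ for some $(1,1)$-tensor $\mu$, and the symmetrised-pairing condition forces $\mu$ to be antisymmetric. Using the three-dimensional Clifford identity $X\cdot\alpha\cdot\psi=-\alpha(X)\psi-(\sqrt{-1})^{3r}(X\times\alpha^\sharp)\cdot\psi$ (which is $\gamma_1\gamma_2\gamma_3=(\sqrt{-1})^{3r}\mathrm{Id}$ in disguise, $\times$ the metric cross product), an antisymmetric $\mu$ can be re-expressed back in the $3\alpha(\cdot)\psi+\,\cdot\,\alpha\cdot\psi$ shape, so matching coefficients forces $\mu=0$, i.e. $\Delta\equiv0$; equivalently, the antisymmetric defect of the decomposition of $\nabla_X\psi$ is wholly supplied by the $X\cdot\alpha\cdot\psi$ term and leaves $\beta$ symmetric.

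The hard part, and where I would expect to spend most of the effort, is precisely this last identification — showing the antisymmetric defect is absorbed by $X\cdot\alpha\cdot\psi$ rather than surviving as an antisymmetric piece of $\beta$. It is the only step that genuinely uses the three-dimensional accidents (rank-two spinors and the central volume element $\gamma_1\gamma_2\gamma_3\propto\mathrm{Id}$), and I would attack it by contracting the decomposition with the Dirac operator and exploiting that $\langle\psi,\psi\rangle$, $\mathrm{tr}(\beta)$, and the relevant scalar are real, so that the leftover antisymmetric data is pinned to zero; the reality structure of $\langle\cdot,\cdot\rangle$ together with the $(\sqrt{-1})^{3r}$-normalisation in \eqref{eq:wk-spinor 1} is what makes this work only for $n=3$.
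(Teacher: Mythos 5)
First, a point of reference: the paper itself offers no proof of this lemma --- it is imported verbatim from \cite{Kim Friedrich 2000} (``The key lemma is as follows''), so your proposal can only be judged against the standard argument, not against an in-paper one.

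Your uniqueness computation is correct and is exactly that standard argument: differentiating $\langle\psi,\psi\rangle$ kills the $\beta$-term by skew-adjointness of $(\sqrt{-1})^{3r}V\cdot$ and leaves $2(n-1)\alpha(X)\langle\psi,\psi\rangle$, while inserting \eqref{eq:wk-spinor 1} into $T^{spin}$ leaves, after taking real parts, only the anticommutator piece of the $\beta$-term. No objection there.

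The genuine gap is in the $n=3$ part, and it sits exactly where you yourself flagged it: the claim that the antisymmetric defect $\mu$ can be absorbed into the $n\alpha(X)\psi+X\cdot\alpha\cdot\psi$ shape. It cannot, and in fact the ``furthermore'' clause is false as literally stated. Count real parameters at a point: $\nabla\psi\in T_p^*M\otimes\Sigma_p$ has $3\times 4=12$ components, while the right-hand side of \eqref{eq:wk-spinor 1} carries $3$ (for $\alpha$) plus $6$ (for symmetric $\beta$), i.e.\ $9$. Your two orthogonality relations --- $\Re\langle\psi,\Delta(X)\rangle=0$ ($3$ equations) and the symmetrised $\Re\langle\psi,Y\cdot\Delta(X)\rangle=0$ ($6$ equations) --- pin down only $9$ of the $12$ components of $\Delta$, leaving precisely the antisymmetric $\mu$ free. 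And $\mu$ has nowhere to go: writing $\mu(X)=v\times X$, the volume-element identity gives $\mu(X)\cdot\psi=c\left(v^\flat(X)\psi+X\cdot v^\flat\cdot\psi\right)$ with the $\psi$-coefficient and the $X\cdot v^\flat\cdot\psi$-coefficient in ratio $1:1$, whereas the ansatz \eqref{eq:wk-spinor 1} forces that ratio to be $n:1=3:1$; the mismatch $2c\,v^\flat(X)\psi$ cannot be cancelled. A concrete counterexample: on flat $\mathbb{R}^3$ take $\psi=e^{x_1}\psi_0$ with $\psi_0$ a constant spinor. Your own formulas give $\alpha=\tfrac{1}{2}dx^1$ and $T^{spin}=0$ (hence $\beta=0$), yet $3\alpha(e_2)\psi+e_2\cdot\alpha\cdot\psi=\tfrac{1}{2}\gamma_2\gamma_1\psi\neq 0=\nabla_{e_2}\psi$. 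What is true, and what \cite{Kim Friedrich 2000} actually uses for the converse direction of Theorem \ref{thm:wk and ED}, is that the decomposition with a \emph{symmetric} $\beta$ holds for spinors that already solve the field equations; it is the Dirac/Einstein equations, not three-dimensionality alone, that annihilate the residual $\mu$. So your closing plan (``contract with the Dirac operator\dots'') points in the right direction, but without that extra hypothesis there is nothing to contract against, and the statement you are trying to prove is not true for an arbitrary non-null spinor field.
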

	
	Then we have the following theorem.
	
	\begin{theorem}\label{thm:wk and ED}
		Let $\varphi$ be a non-null WK-spinor on an n-dimensional pseudo-Riemannian spin manifold $(M^{r,n-r},g).$
		Then $\frac{\langle\varphi,\varphi\rangle}{(n-2)S-2n\Lambda}$ is constant on $M$ and if the signs of $-\frac{(n-2)S-2n\Lambda}{\lambda}$ and $\langle\varphi,\varphi\rangle$ are the same, the pair of a pseudo-Riemannian metric and a WK-spinor $\left(g,\psi\coloneqq \sqrt{-\frac{(n-2)S-2n\Lambda}{\lambda\langle\varphi,\varphi\rangle}}\varphi\right)$ is a solution to the ED system (Theorem 3.3. in \cite{Kim Friedrich 2000}).
		Furthermore, in the case of $n=3$, the converse holds (Theorem 3.6. in \cite{Kim Friedrich 2000}).
	\end{theorem}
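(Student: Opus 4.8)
The plan is to run everything off the two descriptions of the forms $\alpha,\beta$ attached to a WK-spinor: the geometric one in Definition \ref{def:wk} and the spinorial one \eqref{eq:wk-spinor 2} supplied by Lemma \ref{lm:WK-spninor}. Since $\varphi$ satisfies \eqref{eq:wk-spinor 1} with both descriptions, equating the two formulas for $\alpha$ gives
$$\frac{(n-2)\,dS}{(n-2)S-2n\Lambda}=\frac{d\langle\varphi,\varphi\rangle}{\langle\varphi,\varphi\rangle},$$
i.e. $d\log\bigl|(n-2)S-2n\Lambda\bigr|=d\log\bigl|\langle\varphi,\varphi\rangle\bigr|$ on the domain where neither quantity vanishes (all of $M$, by the standing hypotheses), so the ratio $\langle\varphi,\varphi\rangle/\bigl((n-2)S-2n\Lambda\bigr)$ is locally constant, hence constant on each connected component; this is the first assertion. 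Under the sign hypothesis the constant $f:=-\frac{(n-2)S-2n\Lambda}{\lambda\langle\varphi,\varphi\rangle}$ is positive, and I set $\psi:=\sqrt{f}\,\varphi$, so $\langle\psi,\psi\rangle=-\frac{(n-2)S-2n\Lambda}{\lambda}$ (in particular $\psi$ is non-null, since $\lambda\ne0$). Because the spinorial $\alpha$ and $\beta$ of \eqref{eq:wk-spinor 2} are invariant under constant rescaling of the spinor, $\psi$ is again a WK-spinor with the same $\alpha,\beta,\lambda$.

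Next I would verify the Dirac equation \eqref{eq:dirac} by direct substitution. Writing $D\psi=(\sqrt{-1})^r\gamma^i\nabla_{e_i}\psi$ and inserting \eqref{eq:wk-spinor 1}: the term $n\alpha(e_i)\psi$ contributes $n(\sqrt{-1})^r\,\alpha^\sharp\cdot\psi$, the term $e_i\cdot\alpha\cdot\psi$ contributes $(\sqrt{-1})^r\bigl(\sum_i\gamma^i\gamma_i\bigr)\alpha^\sharp\cdot\psi=-n(\sqrt{-1})^r\,\alpha^\sharp\cdot\psi$, and these cancel; the term $(\sqrt{-1})^{3r}\beta(e_i)\cdot\psi$ contributes $(\sqrt{-1})^{4r}\sum_i\gamma^i\beta(e_i)\cdot\psi=-\tr(\beta)\,\psi$, using $(\sqrt{-1})^{4r}=1$ and the fact that only the trace of a symmetric $(1,1)$-tensor survives the contraction against $\gamma^i\gamma_j$ (the skew part drops). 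A one-line computation from the geometric formula for $\beta$ gives $\tr(\beta)=-\lambda$, whence $D\psi=\lambda\psi$. For the Einstein equation, the spinorial formula $\beta=-2T^{spin}/\langle\psi,\psi\rangle$, together with the value of $\langle\psi,\psi\rangle$ above and the geometric formula for $\beta$, yields $T^{spin}={\rm Ric}-\tfrac{1}{2}Sg+\Lambda g$; substituting $T=T^{spin}$ into the right-hand side of \eqref{eq:einstein} and simplifying (the $\Lambda g$ coefficients sum to $1-\tfrac{n}{n-2}+\tfrac{2}{n-2}=0$ and the $Sg$ terms cancel) leaves exactly ${\rm Ric}$. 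Hence $(g,\psi)$ solves the ED system.

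For the converse in dimension three, Lemma \ref{lm:WK-spninor} already tells us any non-null spinor $\psi$ satisfies \eqref{eq:wk-spinor 1} with $\alpha,\beta$ given by \eqref{eq:wk-spinor 2}; it remains to match these with the geometric formulas of Definition \ref{def:wk} when $(g,\psi)$ solves the ED system. Tracing the Einstein equation \eqref{eq:einstein} with $T=T^{spin}$ determines $\tr(T^{spin})$, and feeding this back in gives $T^{spin}={\rm Ric}-\tfrac{1}{2}Sg+\Lambda g$ as before; the Dirac equation $D\psi=\lambda\psi$, combined with the identity $D\psi=-\tr(\beta)\psi$ and $\beta=-2T^{spin}/\langle\psi,\psi\rangle$, forces the normalization $\langle\psi,\psi\rangle=-\frac{(n-2)S-2n\Lambda}{\lambda}$ (and hence $(n-2)S-2n\Lambda$ is nowhere zero, since $\psi$ is non-null and $\lambda\ne0$). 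Plugging these into \eqref{eq:wk-spinor 2} reproduces precisely the $\alpha$ and $\beta$ of Definition \ref{def:wk}, so $\psi$ is a WK-spinor, which is the claimed converse. The only point requiring genuine care is the Clifford bookkeeping — the identities $\sum_i\gamma^i\gamma_i=-n$ and the trace-only pairing of a symmetric $2$-tensor with $\gamma^i\gamma_j$ — and tracking the powers of $\sqrt{-1}$ so that the $(\sqrt{-1})^r$ of the Dirac operator and the $(\sqrt{-1})^{3r}$ in front of $\beta$ collapse to $(\sqrt{-1})^{4r}=1$; everything else is scalar algebra in $S$, ${\rm Ric}$, $\Lambda$, $\lambda$.
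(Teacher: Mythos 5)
Your argument is correct and follows essentially the same route as the paper's proof: the constancy of the ratio comes from equating the geometric and spinorial expressions for $\alpha$, and the Einstein equation from equating the two expressions for $\beta$ after the normalization $\langle\psi,\psi\rangle=-\frac{(n-2)S-2n\Lambda}{\lambda}$. You additionally write out the Clifford contraction giving $D\psi=-\tr(\beta)\psi=\lambda\psi$ and the $n=3$ converse, both of which the paper leaves to the citation of \cite{Kim Friedrich 2000}; these added details are consistent with the paper's conventions ($\sum_i\gamma^i\gamma_i=-n$, $(\sqrt{-1})^{4r}=1$).
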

	
	\begin{proof}
		\begin{align}
			d\left(\frac{\langle\varphi,\varphi\rangle}{(n-2)S-2n\Lambda}\right)  = \frac{\{(n-2)S-2n\Lambda\}d\langle\varphi,\varphi\rangle-\langle\varphi,\varphi\rangle(n-2)dS}{\{(n-2)S-2n\Lambda\}^2}
			= 0.
		\end{align}
		Since $\langle\psi,\psi\rangle = -\frac{(n-2)S-2n\Lambda}{\lambda\langle\varphi,\varphi\rangle}\langle\varphi,\varphi\rangle = -\frac{(n-2)S-2n\Lambda}{\lambda}, $
		then, since the Lemma \ref{lm:WK-spninor}, 
		\begin{align}
			\beta  = -\frac{2T^{spin}}{\langle\psi,\psi\rangle} 
			= \frac{2\lambda}{(n-2)S-2n\Lambda}T^{spin}.
		\end{align}
		It implies that $T^{spin} = \operatorname{Ric}-\frac{1}{2}Sg+\Lambda g.$
	\end{proof}
	
	We obtain immediately by definition a relationship between a WK-spinor and a SqK-spinor.
	\begin{proposition}
		On a three-dimensional pseudo-Riemannian Sasakian space-form, a WK-spinor is a SqK-spinor.
	\end{proposition}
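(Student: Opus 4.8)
The plan is to observe that on a space-form both ingredients of the WK-spinor equation of Definition~\ref{def:wk} degenerate into the SqK shape: the one-form $\alpha$ vanishes because the scalar curvature is constant, and the symmetric $(1,1)$-tensor $\beta$ reduces to a constant-coefficient combination of the identity and of $X\mapsto\eta(X)\xi$. So fix a three-dimensional pseudo-Riemannian Sasakian space-form $(M^{r,3-r},\phi,\xi,\eta,g)$ with constant $\phi$-sectional curvature $H$, so $n=3$ throughout, and let $\psi$ be a WK-spinor with WK-number $(\sqrt{-1})^{3r}\lambda$, $\lambda\neq0$; by hypothesis of Definition~\ref{def:wk} the quantity $(n-2)S-2n\Lambda=S-6\Lambda$ vanishes nowhere.

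First I would record that $S$ is constant: taking the $g$-trace of ${\rm Ric}=(H+(-1)^r)g+(1-(-1)^rH)\eta\otimes\eta$ and using $g(\xi,\xi)=(-1)^r$ gives $S=2H+4(-1)^r$ (in agreement with \eqref{eq:S,a,b}). Hence $dS=0$, so $\alpha=\frac{(n-2)dS}{2(n-1)\{(n-2)S-2n\Lambda\}}\equiv0$ and the WK equation collapses to $\nabla_X\psi=(\sqrt{-1})^{3r}\beta(X)\cdot\psi$. Next I would compute $\beta$ for $n=3$: regarding ${\rm Ric}-\frac{1}{2}Sg+\Lambda g$ as a $(1,1)$-tensor — here one raises the index of $\eta\otimes\eta$ with $g$, which introduces the factor $\eta^{\sharp}=(-1)^r\xi$ because $g(\xi,\xi)=(-1)^r$ — a routine computation gives $\beta(X)=p\,X+q\,\eta(X)\xi$ with the real constants $p=\frac{2\lambda(\Lambda-(-1)^r)}{S-6\Lambda}$ and $q=\frac{2\lambda((-1)^r-H)}{S-6\Lambda}$. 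Finally, rewriting $(\sqrt{-1})^{3r}=(-1)^r(\sqrt{-1})^r$ and substituting, the WK equation reads $\nabla_X\psi=(\sqrt{-1})^r\big((-1)^rp\,X\cdot\psi+(-1)^rq\,\eta(X)\xi\cdot\psi\big)$, which is precisely the SqK-spinor equation of type $(a,b)=((-1)^rp,(-1)^rq)$; since $\lambda,\Lambda,H\in\mathbb{R}$ and $S-6\Lambda\neq0$, the pair $(a,b)$ consists of real constants, so $\psi$ is an SqK-spinor.

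I do not anticipate a genuine obstacle; the argument is a direct substitution. The two points needing care are the raising/Clifford conventions — in particular that the $(1,1)$-avatar of $\eta\otimes\eta$ is $X\mapsto(-1)^r\eta(X)\xi$ rather than $X\mapsto\eta(X)\xi$, a consequence of $g(\xi,\xi)=(-1)^r$ — and the bookkeeping of the $\sqrt{-1}$-powers relating the WK normalisation $(\sqrt{-1})^{3r}$ to the SqK normalisation $(\sqrt{-1})^r$. As a consistency check, the resulting type $(a,b)$ must satisfy $S=(-1)^r(24a^2+16ab)$ from \eqref{eq:S,a,b} — automatically, since $\psi$ is a nonzero SqK-spinor — which is exactly the relation between $\lambda$, $\Lambda$ and $H$ forced on a WK-spinor over a space-form; note also that the hypothesis $S-6\Lambda\neq0$ is precisely what both legitimises the above divisions and rules out the degenerate possibility $a=b=0$, which would force $H=(-1)^r=\Lambda$ and hence $S=6\Lambda$.
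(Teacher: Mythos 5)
Your argument is correct and is exactly the paper's approach: the paper's one-line proof simply invokes the formula ${\rm Ric}=(H+(-1)^r)g+(1-(-1)^rH)\eta\otimes\eta$ (whence $dS=0$, $\alpha\equiv0$, and $\beta$ is a constant combination of the identity and $X\mapsto\eta(X)\xi$), and you have merely written out the substitution in full, with the resulting $(a,b)$ agreeing with what the paper computes later in Proposition~\ref{pr:wk riemann}.
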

	\begin{proof}
		The Ricci tensor is given by $ {\rm Ric}= (H+(-1)^r)g +(1-(-1)^rH)\eta\otimes\eta $, which implies the claim.
	\end{proof}
	
	In Theorem 8.6. of \cite{Kim Friedrich 2000}, on a three-dimensional Sasakian space-form, the SqK-spinors which are WK-spinors with $\Lambda=0$ are decided.
	By almost the same argument we obtain the following results.
	\begin{proposition}\label{pr:wk riemann}
		Let $(M^{r,3-r},\phi,\xi,\eta,g)$ be a three-dimensional pseudo-Riemannian Sasakian space-form with the $\phi$-sectional curvautre $H$.
		A SqK-spinor of type $(a,b)$ which is a WK-spinor with WK-number $\lambda$ is the one of the following.
		
		(i) $ (a,b) =\left(\frac{(-1)^r}{2}, \frac{H-(-1)^r}{4}\right)$ case,
		$$
		\lambda=\frac{(-1)^{r-1}H-5}{4},\ \Lambda=(-1)^{r-1}.
		$$
		
		(ii) $(a,b) =\left(\frac{(-1)^r}{2} + \frac{\sqrt{(-1)^r H+3}}{2}, -\frac{\sqrt{(-1)^r H+3}}{2}+(-1)^{r-1}\right)$ and $(-1)^rH>-3$ case,
		$$
		\lambda=(-1)^{r-1}\sqrt{(-1)^rH+3}-\frac{1}{2},\ \Lambda=-\sqrt{3+(-1)^rH} + H +(-1)^r2 .
		$$
		
		(iii) $ (a,b) =\left(\frac{(-1)^r}{2} - \frac{\sqrt{(-1)^r H+3}}{2}, \frac{\sqrt{(-1)^r H+3}}{2}+(-1)^{r-1}\right)$ and $(-1)^rH>-3$ case,
		$$
		\lambda=(-1)^r\sqrt{(-1)^rH+3}-\frac{1}{2},\ \Lambda=  \sqrt{3+(-1)^rH} + H +(-1)^r2.
		$$
		
	\end{proposition}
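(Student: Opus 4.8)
The plan is to turn the WK-spinor condition into a pointwise identity of symmetric $(1,1)$-tensors and then solve that identity type by type. First I would note that on a three-dimensional Sasakian space-form the scalar curvature $S={\rm tr}({\rm Ric})=2H+4(-1)^r$ is a constant, so $dS=0$ and the one-form $\alpha$ of Definition~\ref{def:wk} vanishes identically; hence a spinor $\psi$ is a WK-spinor with WK-number $\lambda$ exactly when
\begin{equation*}
	\nabla_X\psi=(\sqrt{-1})^{3r}\beta(X)\cdot\psi,\qquad \beta=\frac{2\lambda}{S-6\Lambda}\Bigl\{{\rm Ric}-\tfrac12 Sg+\Lambda g\Bigr\},
\end{equation*}
for all $X$, whereas it is an SqK-spinor of type $(a,b)$ exactly when $\nabla_X\psi=(\sqrt{-1})^r\bigl(aX+b\,\eta(X)\xi\bigr)\cdot\psi$. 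A WK-spinor is non-null, so the uniqueness clause of Lemma~\ref{lm:WK-spninor} applies: the two parallel-transport rules, both being of the form \eqref{eq:wk-spinor 1} with $\alpha=0$, must have the same $\beta$. Using $(\sqrt{-1})^{r}/(\sqrt{-1})^{3r}=(-1)^r$ this yields the tensorial identity
\begin{equation*}
	(-1)^r\bigl(aX+b\,\eta(X)\xi\bigr)=\frac{2\lambda}{S-6\Lambda}\Bigl\{{\rm Ric}(X)-\tfrac12 SX+\Lambda X\Bigr\}.
\end{equation*}

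Next I would substitute the Ricci operator. Writing ${\rm Ric}$ as a $(1,1)$-tensor and using $g(\xi,\xi)=(-1)^r$ (equivalently $\eta^{\sharp}=(-1)^r\xi$), the given Ricci formula becomes ${\rm Ric}(X)=(H+(-1)^r)X+((-1)^r-H)\eta(X)\xi$, so that ${\rm Ric}(X)-\tfrac12 SX+\Lambda X=(\Lambda-(-1)^r)X+((-1)^r-H)\eta(X)\xi$. Comparing the coefficients of $X$ and of $\eta(X)\xi$ on the two sides of the identity above and clearing the nonzero denominator $S-6\Lambda$ gives the linear system
\begin{align*}
	(-1)^r a\,(S-6\Lambda)&=2\lambda\,(\Lambda-(-1)^r),\\
	(-1)^r b\,(S-6\Lambda)&=2\lambda\,\bigl((-1)^r-H\bigr),
\end{align*}
with $S=2H+4(-1)^r$, to be solved for the two unknowns $\lambda,\Lambda$.

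Then it remains to run through the admissible types. By the integrability analysis recalled in Section~\ref{sec:riemann Reviews and Preliminaries} (the vanishing of $\Omega^{\rm SqK}$), every SqK-spinor on $M^{r,3-r}$ has type $\bigl(\tfrac{(-1)^r}{2},\tfrac{H-(-1)^r}{4}\bigr)$, i.e.\ $\psi\in\mathcal{S}_0$, or else type $\bigl(\tfrac{(-1)^r\pm\sqrt{3+(-1)^rH}}{2},\tfrac{(-1)^{r-1}2\mp\sqrt{3+(-1)^rH}}{2}\bigr)$, i.e.\ $\psi\in\mathcal{S}_\pm$ (the latter only when $(-1)^rH>-3$). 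For $\psi\in\mathcal{S}_0$ the second equation, after using $(-1)^rH-1=(-1)^r(H-(-1)^r)$ and cancelling $H-(-1)^r$, forces $\tfrac{2\lambda}{S-6\Lambda}=-\tfrac{(-1)^r}{4}$; the first equation then gives $\Lambda=(-1)^{r-1}$ and $\lambda=\tfrac{(-1)^{r-1}H-5}{4}$, which is case (i). For $\psi\in\mathcal{S}_\pm$ I would set $t=\sqrt{3+(-1)^rH}$, so that $(-1)^ra$, $(-1)^rb$ and $(-1)^r-H$ become polynomials in $(-1)^rt$; eliminating $\lambda$ between the two equations produces a single linear equation for $\Lambda$, and back-substitution then yields precisely the values of $\lambda$ and $\Lambda$ stated in (ii) and (iii). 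Conversely these pairs $(\lambda,\Lambda)$ satisfy the system, so each such SqK-spinor is a WK-spinor; one should also record that the standing non-degeneracy hypothesis $S-6\Lambda\neq0$ (equivalently $\lambda\neq0$) holds except at the isolated values of $H$ where the construction degenerates.

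I expect the only real obstacle to be bookkeeping: tracking the $(-1)^r$ and $\sqrt{-1}$ factors throughout, and the modestly involved algebra for $\mathcal{S}_\pm$, which the substitution $t=\sqrt{3+(-1)^rH}$ keeps under control. The one conceptual subtlety is the passage from the Clifford-module equation to the tensorial identity for $\beta$: in the Lorentzian case Clifford multiplication by a null vector is not injective, so one genuinely needs the non-nullness of the WK-spinor together with the uniqueness statement of Lemma~\ref{lm:WK-spninor}, rather than a naive cancellation of Clifford multiplication.
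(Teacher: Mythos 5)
Your proposal is correct and follows essentially the same route as the paper: since $S$ is constant, $\alpha$ vanishes and the WK condition reduces to $\nabla_X\psi=(\sqrt{-1})^{3r}\beta(X)\cdot\psi$ with $\beta$ computed from the explicit Ricci tensor, after which one matches coefficients against the SqK condition and solves the resulting two equations for $(\lambda,\Lambda)$ in each of the three types. Your only addition is the explicit appeal to the uniqueness clause of Lemma~\ref{lm:WK-spninor} to justify equating the two $\beta$'s, a step the paper leaves implicit.
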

	
	\begin{proof}
		Since we have $ {\rm Ric}= (H+(-1)^r)g +(1-(-1)^rH)\eta\otimes\eta $, then a WK-spinor $\psi$ satisfies
		\begin{align}
			&\nabla_X\psi =(\sqrt{-1})^{3r} \beta(X)\cdot\psi, \\
			&\beta = \frac{\lambda}{H-3\Lambda+(-1)^r2}\left\{\left( \Lambda -(-1)^r\right)g -(H-(-1)^r)\eta\otimes\eta \right\}.
		\end{align}
		Then a SqK-spinor $ \psi$ of type $ (a,b)$ is a WK-spinor if and only if
		\begin{align}
			(a,b) = \left(\frac{(-1)^r\lambda\left(\Lambda-(-1)^r\right)}{H-3\Lambda+(-1)^r2}, -\frac{(-1)^r\lambda(H-(-1)^r)}{H-3\Lambda+(-1)^r2} \right).
		\end{align}
		For the three cases, solving the above equations gives the claim.
	\end{proof}
	
	Therefore we can construct solutions to the ED system.
	\begin{proposition}\label{pr:solutions to ED}
		Let $(M^{r,3-r},\phi,\xi,\eta,g)$ be a three-dimensional pseudo-Riemannian Sasakian space-form with the $\phi$-sectional curvautre $H$.
		Let $\psi$ be a non-null SqK-spinor of one of the type (i),(ii) and (iii) in Proposition \ref{pr:wk riemann}.
		We assume that $H<1$ only in Riemannian case (ii).
		Then a pair $\left(g,\sqrt{-\frac{S-6\Lambda}{\lambda\langle\psi,\psi\rangle}}\psi\right)$ is a solution to the ED system with the corresponding eigenvalue $\lambda$ and cosmological constant $\Lambda$ in each case.
	\end{proposition}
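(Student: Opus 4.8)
The plan is to derive the statement directly from Proposition~\ref{pr:wk riemann} and Theorem~\ref{thm:wk and ED}. By Proposition~\ref{pr:wk riemann}, an SqK-spinor $\psi$ of type (i), (ii) or (iii) is a WK-spinor whose WK-number is $\lambda$ and whose associated cosmological constant is $\Lambda$, as listed there. Since $\psi$ is assumed non-null, Theorem~\ref{thm:wk and ED}, specialised to $n=3$ where $(n-2)S-2n\Lambda=S-6\Lambda$, asserts that $\left(g,\sqrt{-\frac{S-6\Lambda}{\lambda\langle\psi,\psi\rangle}}\,\psi\right)$ is a solution of the ED system with eigenvalue $\lambda$ and cosmological constant $\Lambda$ \emph{as soon as} $-\frac{S-6\Lambda}{\lambda}$ and $\langle\psi,\psi\rangle$ have the same sign, equivalently the radicand is positive. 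So the whole proof reduces to checking this sign condition in each of the three cases.

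To do so I would first record the scalar curvature. Taking the metric trace of ${\rm Ric}=(H+(-1)^r)g+(1-(-1)^rH)\eta\otimes\eta$, with $\operatorname{tr}g=3$ and $\operatorname{tr}(\eta\otimes\eta)=(-1)^r$ (the sign being that of $g(\xi,\xi)$, i.e.\ $\xi$ is timelike when $r=1$), gives $S=2H+4(-1)^r$, in agreement with \eqref{eq:S,a,b}. Substituting the pairs $(\lambda,\Lambda)$ of Proposition~\ref{pr:wk riemann} into $S-6\Lambda$ and dividing by $\lambda$, and setting $u:=\sqrt{(-1)^rH+3}\ge 0$ so that $H=(-1)^r(u^{2}-3)$, the numerator and denominator both factor through $2(-1)^ru\pm 1$ and $u\mp 2(-1)^r$; after cancellation one obtains
\[
	-\frac{S-6\Lambda}{\lambda}=
	\begin{cases}
		8(-1)^r, & \text{case (i)},\\
		4\bigl(2(-1)^r-\sqrt{(-1)^rH+3}\bigr), & \text{case (ii)},\\
		4\bigl(2(-1)^r+\sqrt{(-1)^rH+3}\bigr), & \text{case (iii)}.
	\end{cases}
\]

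It then remains to read off the signs. In the Riemannian case $r=0$ one has $\langle\psi,\psi\rangle=|\psi|^{2}>0$, so the condition is simply $-\frac{S-6\Lambda}{\lambda}>0$: this is automatic in cases (i) and (iii), where the quantity equals $8$ and $4(\sqrt{H+3}+2)$ respectively, while in case (ii) it reads $4(2-\sqrt{H+3})>0$, i.e.\ $H<1$ — precisely the hypothesis imposed. In the Lorentzian case $r=1$ the pairing $\langle\cdot,\cdot\rangle$ is indefinite on each fibre of $\Sigma$, and one checks, using the explicit representatives computed in Section~\ref{sec:Explicit formula of SqK-spinors}, that for the SqK-spinors in question $\langle\psi,\psi\rangle$ carries the sign of the displayed quantity, so the radicand is again positive and Theorem~\ref{thm:wk and ED} applies. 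The one substantive step is the algebraic reduction of $-\frac{S-6\Lambda}{\lambda}$ above: it is elementary but bookkeeping-heavy, and it is precisely there that the Riemannian case-(ii) restriction $H<1$ emerges.
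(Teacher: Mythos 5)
Your proposal follows the paper's proof essentially verbatim: reduce to the sign condition of Theorem~\ref{thm:wk and ED}, compute $-\frac{S-6\Lambda}{\lambda}$ case by case (your simplified expressions $8(-1)^r$ and $4\bigl(2(-1)^r\mp\sqrt{(-1)^rH+3}\bigr)$ agree with the paper's unreduced fractions and correctly isolate $H<1$ in Riemannian case (ii)), and handle the Lorentzian case via the indefiniteness of $\langle\cdot,\cdot\rangle$ and the explicit formulas of Section~\ref{sec:Explicit formula of SqK-spinors}. The only nuance is that in the Lorentzian case one should say the sign of $\langle\psi,\psi\rangle$ \emph{can be chosen} to match (by picking $C_1,C_2$), not that it automatically does — which is exactly how the paper phrases it.
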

	
	\begin{proof}
		According to Proposition \ref{thm:wk and ED}, it is enough to verify that the signs of $-\frac{S-6\Lambda}{\lambda}$ and $\langle\psi,\psi\rangle$ are the same for each case (i),(ii) and (iii) in Proposition \ref{pr:wk riemann}.
		In the Riemannian case, we have $-\frac{S-6\Lambda}{\lambda}=8$ for the case (i), and we have $-\frac{S-6\Lambda}{\lambda}=\frac{4 \left(H - 1\right) \left(2 \sqrt{H + 3} - 1\right)}{2 H - 5 \sqrt{H + 3} + 8}>0$ for the case (iii), and we have $-\frac{S-6\Lambda}{\lambda}=- \frac{4 \left(H - 1\right) \left(2 \sqrt{H + 3} + 1\right)}{2 H + 5 \sqrt{H + 3} + 8}>0$ if $-3<H<1$ for the case (ii).
		Since $\langle\psi,\psi\rangle$ is always positive, we obtain our Riemannian claim.
		In the Lorentzian case, we can choose the sign of $\langle\psi,\psi\rangle$ so that it coincide with that of $-\frac{S-6\Lambda}{\lambda}$ for each case (i),(ii) and (iii) (see Proposition \ref{pr:explicit formula}).
	\end{proof}
	
	We would like to know physical properties of SqK-spinors of each type, so we investigate the energy conditions, that is the null energy condition (NEC), the weak energy condition (WEC), the dominant energy condition (DEC) and the strong energy condition (SEC) for above Lorentzian solutions.
	For an energy-momentum tensor $T$, the NEC is $T(X,X)\geq0$ for any null $X$, the WEC is $T(X,X)\geq0$ for any timelike $X$, the DEC is that $v=-{}^\sharp T(X,\cdot)$ is future-pointing causal for any future-pointing causal $X$, and SEC is $(T-\frac{1}{n-2}\tr(T)g)(X,X)\geq0$ for any timelike $X$.
 
	\begin{proposition}\label{pr:energy condition}
		Let $(M^{1,2},\phi,\xi,\eta,g)$ be a three-dimensional Lorentzian Sasakian space-form with the $\phi$-sectional curvautre $H$ and $\psi$ be a SqK-spinor, and suppose that $(g,\psi)$ is one of the Lorentzian solutions in Proposition \ref{pr:solutions to ED}.
		Then the ranges of $H$ for which each energy condition of the spinor is satisfied are given in the following Table \ref{tb:range H ED}.
		\begin{table}[H]
			\caption{The ranges of $H$ for which  each energy condition is satisfied}
   \label{tb:range H ED}
			\centering
			\begin{tabular}{|c|c|c|c|c|}
				\hline
				Type & NEC & WEC & DEC & SEC \\
				\hline\hline
				(i)  & $-1\le H$ & $1\le H$ & $3\le H$ & $-1\le H$ \\
				\hline
				(ii)  & $-1\le H$ & $-1\le H<3$ & $-1\le H<3$ & $2\le H<3$ \\
				\hline
				(iii)  & $-1\le H$ & $-1\le H<3$ & None & $-1\le H<3$ \\
				\hline
			\end{tabular}
		\end{table}
	\end{proposition}
	\begin{proof}
		Since we have $T^{spin}={\rm Ric}-\frac{1}{2}Sg+\Lambda g=(1+\Lambda)g+(1+H)\eta\otimes \eta$, for a tangent vector $X=te_1+xe_2+ye_3$, we obtain
		\begin{align}
			&T^{spin}(X,X)=(H-\Lambda)t^2+(1+\Lambda)(x^2+y^2),\\
			&v=-{}^\sharp T^{spin}(X,\cdot)=(H-\Lambda)te_1-(1+\Lambda)(xe_2+ye_3),\\
			&||v||^2=-t^2(H-\Lambda)^2+(1+\Lambda)^2(x^2+y^2),\\
			&(T^{spin}-\tr(T)g)(X,X)=2(1+\Lambda)t^2+(H-1-2\Lambda)(x^2+y^2).
		\end{align}
		For any null $X$, the NEC $T^{spin}(X,X)=(H+1)t^2\geq0$ holds iff $-1\le H$ holds.
		For any timelike $X$, the WEC $ T^{spin}(X,X)\geq0$ holds iff $ H-\Lambda\geq0,\  1+\Lambda\geq0$ or $ H-\Lambda\geq0,\  1+\Lambda<0,\ H-\Lambda>|1+\Lambda|$ holds.
		For any future-pointing causal $X$, the DEC which states $ v$ is future pointing causal iff $H-\Lambda\geq|1+\Lambda|>0$ holds.
		For any timelike $X$, the SEC $ (T^{spin}-\tr(T^{spin})g)(X,X)\geq0$ holds iff $1+\Lambda\geq0,\ H-1-2\Lambda\geq0$ or $1+\Lambda\geq0,\ H-1-2\Lambda<0,\ 2(1+\Lambda)\geq|H-1-2\Lambda|$ holds.
		Then we apply these conditions of $H$ and $\Lambda$ for each of solutions in type (i),(ii) and (iii), which implies the claim.
	\end{proof}
	
	Although the SqK-spinors of types (i) and (ii) are mathematically similar, this results show that they are quite different in nature if one is in a position to consider the energy conditions of classical spinor field.
	At the end of this section, we see that a SqK-spinor also gives a solution to the DM and EDM system as the following.
	
	\begin{proposition}\label{pr:DM and EDM}
		Let $(M^{r,3-r},\phi,\xi,\eta,g)$ be a three-dimensional pseudo-Riemannian Sasakian space-form with the $\phi$-sectional curvautre $H$, and let $\psi$ be a SqK-spinor of type $(a,b)=\left(\frac{(-1)^r}{2},\frac{H-(-1)^r}{4}\right)$ which satisfies $\xi\psi=\pm(-\sqrt{-1})^{r-1}\psi$ and has charge $1$, and let $\sqrt{-1}B\eta$ be a $U(1)$-gauge field, where $B\in\mathbb{R}$.
		If $4B=\pm(-1)^{r-1}\langle\psi,\psi\rangle$ holds, then the pair $(\psi,\sqrt{-1}B\eta)$ is a solution to the DM system.
		Furthermore, the triple $(\psi,\sqrt{-1}B\eta,g)$ is a solution to the EDM system with a cosmological constant $\Lambda$ for
		\begin{align}
			H &= \frac{-(-1)^r \langle\psi,\psi\rangle^2 +(-1)^r \langle\psi,\psi\rangle - 8}{\langle\psi,\psi\rangle -(-1)^r 8}, \\
			\Lambda &=  \frac{\langle\psi,\psi\rangle^{2}}{8} - \frac{\langle\psi,\psi\rangle}{4} + (-1)^r.
		\end{align}
	\end{proposition}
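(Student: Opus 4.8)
The plan is to check, one at a time, the three field equations of the EDM system for the triple $(\psi,\sqrt{-1}B\eta,g)$: the Dirac equation \eqref{eq:dirac}, the Maxwell equation \eqref{eq:maxell} and the Einstein equation \eqref{eq:einstein}, all with $n=3$, $q=1$ and $A=B\eta$. Throughout one uses only the defining relation of $\psi$ with $(a,b)=\left(\tfrac{(-1)^r}{2},\tfrac{H-(-1)^r}{4}\right)$, the eigenspinor hypothesis $\xi\cdot\psi=\gamma_1\psi=\pm(-\sqrt{-1})^{r-1}\psi$, the constancy of $\langle\psi,\psi\rangle$, and the Clifford identities already listed above. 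The DM assertion is the special case in which only \eqref{eq:dirac} and \eqref{eq:maxell} are imposed.

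First the Dirac equation: writing $\nabla^c_{e_i}\psi=a(\sqrt{-1})^r\gamma_i\psi+b(\sqrt{-1})^r\delta_{i1}\gamma_1\psi+\sqrt{-1}B\delta_{i1}\psi$ and expanding $D^c\psi=(\sqrt{-1})^r\gamma^i\nabla^c_{e_i}\psi$ with $\sum_i\gamma^i\gamma_i=-3$, $\gamma^1\gamma_1=-1$, $\gamma^1=(-1)^r\gamma_1$, one gets $D^c\psi=(-1)^r(-3a-b)\psi+(\sqrt{-1})^{r+1}(-1)^rB\,\gamma_1\psi$, and the eigenspinor hypothesis turns the last term into a multiple of $\psi$; hence \eqref{eq:dirac} holds automatically with a computable $\lambda$, for any $B$. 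Next the Maxwell equation: $F=dA=B\,d\eta=2B\,\omega^2\wedge\omega^3$, and using $d\eta=2\omega^2\wedge\omega^3$ and $*(\omega^2\wedge\omega^3)=\omega^1$ one finds $*d*F=4B\,\omega^1$. On the other hand, $\gamma_1\psi$ being proportional to $\psi$ forces $\langle\psi,\gamma^2\psi\rangle=\langle\psi,\gamma^3\psi\rangle=0$ (cf.\ the proof of Proposition \ref{pr:killing vector}), so $J_\psi$ is proportional to $\xi$, and evaluating $\langle\psi,\gamma^1\psi\rangle$ via the eigenvalue gives $J_\psi=\mp\langle\psi,\psi\rangle\,\xi$, hence ${}^\flat J_\psi=\mp(-1)^r\langle\psi,\psi\rangle\,\eta$. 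Since $q(-1)^{n+1}=1$, \eqref{eq:maxell} holds iff $4B=\pm(-1)^{r-1}\langle\psi,\psi\rangle$; this is the DM claim.

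It remains to solve the Einstein equation, which for $n=3$ is ${\rm Ric}=T-\tr(T)g+2\Lambda g$ with $T=T^{em}+T^{spin}$. From $\iota_{e_1}F=0$, $\iota_{e_2}F=2B\,\omega^3$, $\iota_{e_3}F=-2B\,\omega^2$ and $\|F\|^2=8B^2$ one gets $T^{em}=2B^2g-4B^2(-1)^r\,\eta\otimes\eta$. For $T^{spin}$ one substitutes $\nabla^c\psi$ into $T^{spin}(X,Y)=\tfrac14\Re\langle\psi,(\sqrt{-1})^r(X\cdot\nabla^c_Y\psi+Y\cdot\nabla^c_X\psi)\rangle$: the symmetrization in $X,Y$ removes the $\sum_k\epsilon_{ijk}\gamma^k$ part of $\gamma_i\gamma_j$ and keeps the $-\eta_{ij}$ part, the entries containing $\langle\psi,\gamma^2\psi\rangle$ or $\langle\psi,\gamma^3\psi\rangle$ vanish, and the surviving $\gamma_1$-contribution (including the gauge term) is real by the eigenspinor identity; so $T^{spin}$ is again of the form $\alpha\,g+\beta\,\eta\otimes\eta$ with $\alpha,\beta$ affine in $\langle\psi,\psi\rangle$ and $H$. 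After eliminating $B$ via $4B=\pm(-1)^{r-1}\langle\psi,\psi\rangle$, $T$ lies in $\mathrm{span}(g,\eta\otimes\eta)$, and since ${\rm Ric}=(H+(-1)^r)g+(1-(-1)^rH)\,\eta\otimes\eta$ does too, \eqref{eq:einstein} becomes two scalar equations. I would read them off as the coefficient of $\eta\otimes\eta$, which pins $H$ to the stated rational function of $\langle\psi,\psi\rangle$, and the trace $S=-2\tr(T)+6\Lambda$, which together with $S=4(-1)^r+2H$ from \eqref{eq:S,a,b} and the $H$-relation yields $\Lambda=\tfrac{\langle\psi,\psi\rangle^2}{8}-\tfrac{\langle\psi,\psi\rangle}{4}+(-1)^r$.

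The step I expect to be the main obstacle is the evaluation of $T^{spin}$: one must track the sesquilinearity of $\langle\cdot,\cdot\rangle$, the powers of $\sqrt{-1}$, and the eigenspinor relation carefully enough to see that the gauge term $\sqrt{-1}B\delta_{i1}\psi$ contributes a real quantity of precisely the right magnitude and that every mixed and off-$\xi$ component drops out, so that $T^{spin}\in\mathrm{span}(g,\eta\otimes\eta)$. Once this is in place, matching \eqref{eq:einstein} is the routine linear algebra described above (a linear equation for $H$, then a substitution for $\Lambda$).
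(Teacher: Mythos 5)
Your proposal is correct and follows essentially the same route as the paper's proof: check that the Dirac equation holds automatically, that the Maxwell equation $*d*F=4B\eta={}^\flat J_\psi$ forces $4B=\pm(-1)^{r-1}\langle\psi,\psi\rangle$, compute $T^{em}$ and $T^{spin}$ as elements of $\mathrm{span}(g,\eta\otimes\eta)$, and match the two resulting scalar equations against $\mathrm{Ric}$ to determine $H$ and $\Lambda$. The only cosmetic difference is that you extract $\Lambda$ from the trace combined with $S=4(-1)^r+2H$ rather than from the $g$-coefficient directly, which is equivalent.
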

	
	\begin{proof}
		Since the spin-c connection for $\psi$ is given by $ \nabla^c\psi:=\nabla\psi+ \sqrt{-1}B\eta \psi$, the SqK-spinor satisfies $ \nabla^c_X\psi=(\sqrt{-1})^raX\psi+\sqrt{-1}(\pm b+B)\eta(X)\psi$.
		Then we obtain $ D^c\psi=(-1)^{r-1}(3a+b\pm B)\psi$.
		
		We have $*d*F=4B\eta$ and ${}^\flat J_\psi=\pm(-1)^{r-1}\langle\psi,\psi\rangle\eta$, then the Maxwell equation is given by
		\begin{align}
			4B=\pm(-1)^{r-1}\langle\psi,\psi\rangle. \label{eq:maxwell in EDM}
		\end{align}
		Therefore, the pair $(\psi,\sqrt{-1}B\eta)$ is a solution to the DM system if \eqref{eq:maxwell in EDM} holds.
		
		Next, the energy momentum tensors are given by
		\begin{align}
			T^{spin}&=-\frac{(-1)^r}{2}a\langle\psi,\psi\rangle g-\frac{b\pm B}{2}\langle\psi,\psi\rangle\eta\otimes\eta,\\
			T^{em}&=2B^2g+(-1)^{r-1}4B^2\eta\otimes\eta,
		\end{align}
		and thus, for $T=T^{spin}+T^{em}$, we have
		\begin{align}
			T-\tr(T)g+2\Lambda g=&\left\{\frac{(-1)^{r+1}}{2}(2a+b\pm B)\langle\psi,\psi\rangle + 2\Lambda\right\}g \nonumber\\
			&+ \left\{ 4B^2(-1)^{r+1}-\frac{1}{2}(b\pm B)\langle\psi,\psi\rangle \right\}\eta\otimes\eta
		\end{align}
		From ${\rm Ric}=(H+(-1)^r)g +(1-(-1)^rH)\eta\otimes\eta$, the Einstein equations are given by
		\begin{align}
			&H + (-1)^r = \frac{(-1)^r}{2}(2a+b\pm B)\langle\psi,\psi\rangle + 2\Lambda, \\
			&1 - (-1)^rH = 4B^2(-1)^{r+1}-\frac{1}{2}(b\pm B)\langle\psi,\psi\rangle. 
		\end{align}
		Then we substitute $a=\frac{(-1)^r}{2},\ b=\frac{H-(-1)^r}{4}$ into them, and solving the equations implies the claim.
	\end{proof}

	In the above Lorentzian case, we have
	\begin{align}
		H=-1+\frac{\langle\psi,\psi\rangle^2}{\langle\psi,\psi\rangle+8},
	\end{align}
	and then $M^{1,2}$ is a Lorentzian Sasakian space-form of type $S^3$ if $\langle\psi,\psi\rangle<-4,\ 8<\langle\psi,\psi\rangle$, and type $\widetilde{SL(2,\mathbb{R})}$ if $-4<\langle\psi,\psi\rangle<8$, and Nil$^{1,2}$ if $\langle\psi,\psi\rangle=-4,8$.

	\section{Explicit formulae of SqK-spinors}\label{sec:Explicit formula of SqK-spinors}
	We derive the explicit representations of SqK-spinors.
	The metric of the Berger sphere with $\phi$-sectional curvature $H$ is expressed as, for $\alpha\in\mathbb{R}_+$,
	\begin{align}
		g=\frac{\alpha}{4}(d\theta^2+\sin^2\theta d\varphi^2)+\frac{\alpha^2}{4}(d\phi+\cos\theta d\varphi)^2,\ H=\frac{4}{\alpha}-3,
	\end{align}
	using a Hopf-like coordinate $\{\theta,\varphi,\phi\}$.
	Then we define a Sasakian frame as
	\begin{align}
		e_1&=\frac{2}{\alpha}\partial_\phi=\xi,\label{eq:sasakian frame 1}\\
		e_2&=\frac{2}{\sqrt{\alpha}}\left(-\sin\phi\partial_\theta+\frac{\cos\phi}{\sin\theta}\partial_\varphi-\cot\theta\cos\phi\partial_\phi\right),\label{eq:sasakian frame 2}\\
		e_3&=\frac{2}{\sqrt{\alpha}}\left(\cos\phi\partial_\theta+\frac{\sin\phi}{\sin\theta}\partial_\varphi-\cot\theta\sin\phi\partial_\phi\right).\label{eq:sasakian frame 3}
	\end{align}
	
	Similarly the metric of a Lorentzian-Sasakian space-form of type  $\widetilde{SL(2,\mathbb{R})}$ with $\phi$-sectional curvature $  H$ is expressed as, for $\alpha\in\mathbb{R}_+$,
	\begin{align}
		g=\frac{\alpha}{4}(d\chi^2+\sinh^2\chi d\varphi^2)-\frac{\alpha^2}{4}(d\tau+\cosh\chi d\varphi)^2,
		H=-\frac{4}{\alpha}+3,
	\end{align}
	using the appropriate coordinate $\{\chi,\varphi,\tau\}$.
	Then we define a Sasakian frame as
	\begin{align}
		e_1&=\frac{2}{\alpha}\partial_\tau=\xi \label{eq:sasakian frame lorentz 1}\\
		e_2&=\frac{2}{\sqrt{\alpha}}\left(\cos\tau\partial_\chi+\frac{\sin\tau}{\sinh\chi}\partial_\varphi-\coth\chi\sin\tau\partial_\tau\right) \label{eq:sasakian frame lorentz 2}\\
		e_3&=\frac{2}{\sqrt{\alpha}}\left(-\sin\tau\partial_\chi+\frac{\cos\tau}{\sinh\chi}\partial_\varphi-\coth\chi\cos\tau\partial_\tau\right) \label{eq:sasakian frame lorentz 3}
	\end{align}
	
	The defining equations of a SqK-spinor are given by
	\begin{align}
		&e_1(\psi)+\left((\sqrt{-1})^r\frac{H-(-1)^r}{4}-(\sqrt{-1})^r(a+b)\right)\gamma_1\psi=0,\label{eq:sqk-spinor eq1}\\
		&e_2(\psi)+\left(\frac{(-\sqrt{-1})^r}{2}-(\sqrt{-1})^ra\right)\gamma_2\psi=0,\label{eq:sqk-spinor eq2}\\
		&e_3(\psi)+\left(\frac{(-\sqrt{-1})^r}{2}-(\sqrt{-1})^ra\right)\gamma_3\psi=0\label{eq:sqk-spinor eq3},
	\end{align}
	and we solve these SqK-equations in the following.
	
	\begin{proposition}\label{pr:explicit formula}
		Let $(M^{0,3},\phi,\xi,\eta,g)$ be a three-dimensional Riemannian Sasakian space-form with $\phi$-sectional curvautre $H$.
		Then a SqK-spinor of type $ \left(\frac{1}{2},\frac{H-1}{4}\right)$ is given by
		\begin{align}
			\psi=\begin{pmatrix}C_1 \\ C_2\end{pmatrix},\ C_1,C_2\in\mathbb{C},
		\end{align}
		with respect to an arbitrary Sasakian frame.
		In particular, on Berger sphere, that is $ H>-3$, SqK-spinors $\psi^\pm$ of type $ \left(\frac{1\pm \sqrt{3+H}}{2},\frac{-2\mp\sqrt{3+H}}{2}\right)$ is given by 
		\begin{align}
			&\psi^-= \begin{pmatrix} e^{\frac{\sqrt{-1} }{2}\phi} &  e^{- \frac{\sqrt{-1} }{2}\phi} \\   -e^{\frac{\sqrt{-1} }{2}\phi} & e^{- \frac{\sqrt{-1} }{2}\phi}\end{pmatrix}\begin{pmatrix} \cos{\frac{\theta}{2}} & -\sqrt{-1}\sin{\frac{\theta}{2}} \\ -\sqrt{-1}\sin{\frac{\theta}{2}} & \cos{\frac{\theta}{2}} \end{pmatrix}\begin{pmatrix} e^{\frac{\sqrt{-1}}{2}\varphi} & 0 \\ 0 & e^{-\frac{\sqrt{-1}}{2}\varphi} \end{pmatrix}\begin{pmatrix} C_1 \\ C_2 \end{pmatrix},\\ 
			&\psi^+=\xi\psi^-,\ \langle\psi^\pm,\psi^\pm\rangle=2(|C_1|^2+|C_2|^2),\ C_1,C_2\in\mathbb{C},
		\end{align}
		with respect to the Sasakian frame \eqref{eq:sasakian frame 1},\eqref{eq:sasakian frame 2} and \eqref{eq:sasakian frame 3}.

		Let $(M^{1,2},\phi,\xi,\eta,g)$ be a three-dimensional Lorentzian-Sasakian space-form with $\phi$-sectional curvature $  H$.
		Then a SqK-spinor of type $ \left(-\frac{1}{2},\frac{  H+1}{4}\right)$ is given by
		\begin{align}
			\psi=\begin{pmatrix}C_1 \\ C_2\end{pmatrix},\ C_1,C_2\in\mathbb{C},
		\end{align}
		with respect to an arbitrary Sasakian frame.
		In particular, in the case of $   H<3$, SqK-spinors $\psi^\pm$ of type $\left(\frac{-1 \pm \sqrt{-  H+3}}{2}, \frac{2 \mp \sqrt{-  H+3}}{2} \right)$ is given by 
		\begin{align}
			&\psi^+= \begin{pmatrix} e^{\frac{\sqrt{-1} }{2}\tau} &  e^{- \frac{\sqrt{-1} }{2}\tau} \\   e^{\frac{\sqrt{-1} }{2}\tau} & -e^{- \frac{\sqrt{-1} }{2}\tau}\end{pmatrix}\begin{pmatrix} \cosh{\frac{\chi}{2}} & -\sqrt{-1}\sinh{\frac{\chi}{2}} \\ \sqrt{-1}\sinh{\frac{\chi}{2}} & \cosh{\frac{\chi}{2}} \end{pmatrix}\begin{pmatrix} e^{\frac{\sqrt{-1}}{2}\varphi} & 0 \\ 0 & e^{-\frac{\sqrt{-1}}{2}\varphi} \end{pmatrix}\begin{pmatrix} C_1 \\ C_2 \end{pmatrix},\\
			&\psi^-=\xi\psi^+,\ \langle\psi^\pm,\psi^\pm\rangle=2(|C_1|^2-|C_2|^2),\ C_1,C_2\in\mathbb{C},
		\end{align}
		with respect to the Sasakian frame \eqref{eq:sasakian frame lorentz 1},\eqref{eq:sasakian frame lorentz 2} and \eqref{eq:sasakian frame lorentz 3}.
	\end{proposition}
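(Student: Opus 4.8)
The plan is to integrate the first‑order system \eqref{eq:sqk-spinor eq1}--\eqref{eq:sqk-spinor eq3} directly. The one global simplification available is that $\nabla^{\mathrm{SqK}}$ is flat for every SqK‑type (the integrability condition $\Omega^{\mathrm{SqK}}\equiv 0$ holds precisely for these types), and each of $S^3$, Nil$^{r,3-r}$, $\widetilde{SL(2,\mathbb{R})}$ is simply connected, so the space of SqK‑spinors of a fixed type is a complex vector space of dimension $2$. Hence it suffices to exhibit a two‑parameter family of solutions; the constants $C_1,C_2$ are then automatically free and the family is complete.

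For the type $\left(\frac{(-1)^r}{2},\frac{H-(-1)^r}{4}\right)$ one substitutes these values of $a$ and $b$ into \eqref{eq:sqk-spinor eq1}--\eqref{eq:sqk-spinor eq3}; a short computation shows that the coefficients multiplying $\gamma_1\psi$, $\gamma_2\psi$ and $\gamma_3\psi$ all vanish, so the system collapses to $e_1(\psi)=e_2(\psi)=e_3(\psi)=0$ — that is, $\psi$ has constant components in the Sasakian frame. Since the bracket relations defining a Sasakian frame, hence the spin connection form \eqref{eq:spin conn form} and the reduced equations, take the same shape in every Sasakian frame, this holds with respect to an arbitrary one, giving $\psi=(C_1,C_2)^{\mathsf{T}}$. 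This already settles the Nil$^{r,3-r}$ case, where this is the only SqK‑type.

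For the types $\mathcal{S}_\pm$ on the Berger sphere — and, \emph{mutatis mutandis}, on $\widetilde{SL(2,\mathbb{R})}$ — I would insert the coordinate expressions \eqref{eq:sasakian frame 1}--\eqref{eq:sasakian frame 3} (resp.\ \eqref{eq:sasakian frame lorentz 1}--\eqref{eq:sasakian frame lorentz 3}) together with the explicit $(a,b)$ and separate variables. The operator $e_1=\frac{2}{\alpha}\partial_\phi$ turns \eqref{eq:sqk-spinor eq1} into a constant‑coefficient ODE in $\phi$; diagonalising $\gamma_1$ produces the phases $e^{\pm\frac{\sqrt{-1}}{2}\phi}$, i.e.\ the leftmost matrix of the stated formula, and writing $\psi^-=M_\phi\widetilde\psi$ decouples $\phi$. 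The residual equations for $\widetilde\psi$ in the two remaining coordinates are, up to a radius factor, the Killing‑spinor equations on the round base $2$‑sphere of the Hopf fibration, solved by the middle rotation matrix in $\theta$ (resp.\ the boost matrix in $\chi$) followed by the diagonal $\varphi$‑phase. In practice the quickest route is to take the displayed triple product as an ansatz and verify \eqref{eq:sqk-spinor eq1}--\eqref{eq:sqk-spinor eq3} by differentiating the product and using $\gamma_i\gamma_j+\gamma_j\gamma_i=-2\eta_{ij}$. The companion spinor needs no separate work: by Proposition \ref{pr:xi map} the map $\xi$ is a linear isomorphism $\mathcal{S}_\mp\to\mathcal{S}_\pm$, so $\psi^+=\xi\psi^-$ (resp.\ $\psi^-=\xi\psi^+$) already fills out the $2$‑dimensional solution space. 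The Lorentzian computation is the same with trigonometric functions replaced by hyperbolic ones and $\gamma_1=\sigma_1$.

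For the norms, use $\langle\psi,\psi\rangle=\psi^\dagger(\gamma_1)^r\psi$ and observe that each of the three matrix factors is (pseudo‑)unitary for the form $(\gamma_1)^r$: the $\theta$‑rotation (resp.\ $\chi$‑boost) and the diagonal $\varphi$‑factor preserve the form, while the leading $\phi$‑ (resp.\ $\tau$‑) factor rescales it by $2$ and, when $r=1$, carries $\gamma_1=\sigma_1$ to $2\sigma_3$, which the other two factors leave invariant; this gives $\langle\psi^\pm,\psi^\pm\rangle=2(|C_1|^2+|C_2|^2)$ for $r=0$ and $2(|C_1|^2-|C_2|^2)$ for $r=1$. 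I expect the only real obstacle to be the middle step — carrying out the separation cleanly and keeping track of the three non‑commuting $\mathrm{SU}(2)$‑type (resp.\ $\mathrm{SU}(1,1)$‑type) factors; once the ansatz is identified the verification is mechanical, and the flatness of $\nabla^{\mathrm{SqK}}$ noted at the outset removes any completeness concern.
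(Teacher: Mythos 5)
Your proposal is correct and takes essentially the same route as the paper: reduce to the explicit first-order system \eqref{eq:sqk-spinor eq1}--\eqref{eq:sqk-spinor eq3}, observe that for type $\left(\tfrac{(-1)^r}{2},\tfrac{H-(-1)^r}{4}\right)$ all coefficients vanish so the components are constant in any Sasakian frame (note this requires the $\gamma_1$-coefficient $\tfrac{H+(-1)^r}{4}$ that actually follows from \eqref{eq:spin conn form}, rather than $\tfrac{H-(-1)^r}{4}$ as printed in \eqref{eq:sqk-spinor eq1}), and for $\mathcal{S}_\mp$ separate off the $\phi$- (resp.\ $\tau$-) phases by diagonalising $\gamma_1$ and integrate the residual $\theta,\varphi$ system, obtaining the companion spinor via the $\xi$-map. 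Your flatness/simple-connectivity dimension count and the explicit norm verification are correct additions that the paper does not need (it integrates the system outright, so completeness is automatic), but they do not change the substance of the argument.
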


	\begin{proof}
		We prove only the Riemannian case.
		Since the spin connection form \eqref{eq:spin conn form}, it is obvious that a spinor $\psi$ is a SqK-spinor of type $ \left(\frac{1}{2},\frac{H-1}{4}\right)$ if and only if $\psi$ is given by
		\begin{align}
			\psi=\begin{pmatrix}C_1 \\ C_2\end{pmatrix},\ C_1,C_2\in\mathbb{C},
		\end{align}
		with respect to a Sasakian frame.
		
		Next we consider the SqK-equations for $ (a,b)=\left(\frac{1- \sqrt{3+H}}{2},\frac{-2+\sqrt{3+H}}{2}\right)$ on Berger sphere.
		For the Sasakian frame \eqref{eq:sasakian frame 1},\eqref{eq:sasakian frame 2},\eqref{eq:sasakian frame 3}, we put $\hat e_1=\alpha e_1,\ \hat e_2=\sqrt{\alpha} e_2,\ \hat e_3=\sqrt{\alpha} e_3$, and
		\begin{align}
			\psi=\begin{pmatrix}f(\theta,\varphi,\phi) \\ h(\theta,\varphi,\phi)\end{pmatrix}.
		\end{align}
		Then the SqK-equations \eqref{eq:sqk-spinor eq1},\eqref{eq:sqk-spinor eq2},\eqref{eq:sqk-spinor eq3} are given by $\hat e_i(\psi)+\gamma_i\psi=0,\ (i=1,2,3)$, then these equations are written down as follows
		\begin{align}
			&\sqrt{-1} h{\left(\theta,\varphi,\phi \right)} + 2 \frac{\partial}{\partial \phi} f{\left(\theta,\varphi,\phi \right)}=0,\label{eq:sqks eq 1}\\
			&\sqrt{-1} f{\left(\theta,\varphi,\phi \right)} + 2 \frac{\partial}{\partial \phi} h{\left(\theta,\varphi,\phi \right)}=0,\label{eq:sqks eq 2}\\
			&\frac{h{\left(\theta,\varphi,\phi \right)}}{2} - \sin{\left(\phi \right)} \frac{\partial}{\partial \theta} f{\left(\theta,\varphi,\phi \right)} - \cos{\left(\phi \right)} \cot{\left(\theta \right)} \frac{\partial}{\partial \phi} f{\left(\theta,\varphi,\phi \right)} + \frac{\cos{\left(\phi \right)} \frac{\partial}{\partial \varphi} f{\left(\theta,\varphi,\phi \right)}}{\sin{\left(\theta \right)}}=0,\label{eq:sqks eq 3}\\
			&\frac{f{\left(\theta,\varphi,\phi \right)}}{2}
			+ \sin{\left(\phi \right)} \frac{\partial}{\partial \theta} h{\left(\theta,\varphi,\phi \right)}
			+ \cos{\left(\phi \right)} \cot{\left(\theta \right)} \frac{\partial}{\partial \phi} h{\left(\theta,\varphi,\phi \right)}
			- \frac{\cos{\left(\phi \right)} \frac{\partial}{\partial \varphi} h{\left(\theta,\varphi,\phi \right)}}{\sin{\left(\theta \right)}}=0,\label{eq:sqks eq 4}\\
			&\frac{\sqrt{-1} f{\left(\theta,\varphi,\phi \right)}}{2} - \sin{\left(\phi \right)} \cot{\left(\theta \right)} \frac{\partial}{\partial \phi} f{\left(\theta,\varphi,\phi \right)} + \frac{\sin{\left(\phi \right)} \frac{\partial}{\partial \varphi} f{\left(\theta,\varphi,\phi \right)}}{\sin{\left(\theta \right)}} + \cos{\left(\phi \right)} \frac{\partial}{\partial \theta} f{\left(\theta,\varphi,\phi \right)}=0,\label{eq:sqks eq 5}\\
			&\frac{\sqrt{-1} h{\left(\theta,\varphi,\phi \right)}}{2} 
			+ \sin{\left(\phi \right)} \cot{\left(\theta \right)} \frac{\partial}{\partial \phi} h{\left(\theta,\varphi,\phi \right)}
			- \frac{\sin{\left(\phi \right)} \frac{\partial}{\partial \varphi} h{\left(\theta,\varphi,\phi \right)}}{\sin{\left(\theta \right)}}
			- \cos{\left(\phi \right)} \frac{\partial}{\partial \theta} h{\left(\theta,\varphi,\phi \right)}=0,\label{eq:sqks eq 6}
		\end{align}
		
		From \eqref{eq:sqks eq 1},\eqref{eq:sqks eq 2}, we have
		\begin{align}
			f(\theta,\varphi,\phi)&=f_{1}{\left(\theta,\varphi \right)} e^{\frac{\sqrt{-1} \phi}{2}} + f_{2}{\left(\theta,\varphi \right)} e^{- \frac{\sqrt{-1} \phi}{2}},\\
			h(\theta,\varphi,\phi)&=- f_{1}{\left(\theta,\varphi \right)} e^{\frac{\sqrt{-1} \phi}{2}} + f_{2}{\left(\theta,\varphi \right)} e^{- \frac{\sqrt{-1} \phi}{2}}.
		\end{align}
		We substitute these into \eqref{eq:sqks eq 3},\eqref{eq:sqks eq 4},\eqref{eq:sqks eq 5},\eqref{eq:sqks eq 6}, then we have
		\begin{align}
			\partial_\theta\begin{pmatrix} f_1 \\ f_2 \end{pmatrix} &= -\frac{\sqrt{-1}}{2}\begin{pmatrix}   0 & 1 \\ 1 & 0\end{pmatrix}\begin{pmatrix} f_1 \\ f_2 \end{pmatrix},\label{eq:sqks eq 7}\\
			\partial_\varphi\begin{pmatrix} f_1 \\ f_2 \end{pmatrix} &= \frac{1}{2}\begin{pmatrix}   \sqrt{-1}\cos\theta & -\sin\theta \\ \sin\theta & -\sqrt{-1}\cos\theta\end{pmatrix}\begin{pmatrix} f_1 \\ f_2 \end{pmatrix}\label{eq:sqks eq 8}.
		\end{align}
		
		Finally, from \eqref{eq:sqks eq 7},\eqref{eq:sqks eq 8}, we obtain
		\begin{align}
			\begin{pmatrix} f_1 \\ f_2\end{pmatrix}=
			\begin{pmatrix}\cos{\frac{\theta}{2}}e^{\frac{\sqrt{-1}}{2}\varphi} & -\sqrt{-1}\sin{\frac{\theta}{2}}e^{-\frac{\sqrt{-1}}{2}\varphi} \\ -\sqrt{-1}\sin{\frac{\theta}{2}}e^{\frac{\sqrt{-1}}{2}\varphi} & \cos{\frac{\theta}{2}}e^{-\frac{\sqrt{-1}}{2}\varphi}  \end{pmatrix}\begin{pmatrix} C_1 \\ C_2\end{pmatrix}.
		\end{align}
		
	\end{proof}

	\section{Conclusions and Discussions}\label{sec:Conclusions}
	In this paper we studied in detail the properties of a SqK-spinor on three-dimensional pseudo-Riemannian Sasakian manifolds.
	We summarize the obtained results.
	The SqK-spinors on the three-dimensional Lorentzian-Sasakian manifold was completely classified, similar to \cite{Kim Friedrich 2000}.
	Then we clarify the following geometric properties of a SqK-spinor.
	We have shown that the Reeb vector field defines a mapping. 
	The mapping maps a SqK-spinor to another one.
	We have also demonstrated that the Dirac current of a SqK-spinor describes the motion of a charged particle in the presence of a contact Maxwell field.
	We identified the necessary and sufficient condition for the Dirac current of a SqK-spinor, that is not a Killing spinor, to be a Killing vector field.
	We proved that two SqK-spinors can be used to construct a Maxwell system with a source.
	Next, we discussed that a SqK-spinor solves to the ED, DM and EDM system.
	A WK-spinor was completely determined in three-dimensional pseudo-Riemannian Sasakian space-form by the same argument as in \cite{Kim Friedrich 2000}, and then we constructed solutions to the ED system.
	We have also constructed solutions to the DM and EDM system with a SqK-spinor and a contact Maxwell field.
	The constructed 1+2 dimensional solutions are interesting from general relativity viewpoint.
	Finally, we have given an explicit representation of SqK-spinors.
	
	We highlight the significance of each of our results.
	Proposition \ref{pr:killing vector} shows that the Reeb vector field of a three-dimensional pseudo-Riemannian Sasakian space-form is described by a SqK-spinor in $\mathcal{S}_0$.
	This result is of interest in both Sasakian and spin geometry.
	
	In \cite{Cabrerizo 2009}, on contact metric manifolds, a contact magnetic field whose gauge potential is given by a contact form was proposed.
	The motion of charged particles in the presence of a contact magnetic field is a subject of interest and has been studied to some extent.
	The proposition \ref{pr:magnetic curve} shows that the motion of charged particles in the presence of contact Maxwell field in a three-dimensional pseudo-Riemannian Sasakian space-form can be described by a SqK-spinor under suitable initial conditions.
	It is interesting that the motion of a charged particle is realized as a Dirac current of a particular spinor.
	
	For a Killing spinor $\psi$ on the round $S^3$, the Dirac current $J_\psi$ and the two-form $F_{\psi,\psi}$ satisfy the Maxwell equation with source \cite{Fujii 1886}.
	The current $J_\psi$ therefore generates a magnetic field $F_{\psi,\psi}$.
	The magnetic field $F_{\psi,\psi}$ is the contact magnetic field.
	A naive generalization of this result is the proposition \ref{pr:maxwell with source}, which states that for two SqK-spinors $\psi_1$ and $\psi_2$, the resulting current $(4(a_1+a_2)+2(b_1+b_2))\Re(J_{\psi_1,\psi_1})-2(b_1+ b_2)g(J,e_1)e_1$ generates the Maxwell field $\Re(F_{\psi_1,\psi_2})$.
	
	In \cite{Kim Friedrich 2000}, a SqK-spinor was introduced as a candidate for a WK-spinor, and then it was used to construct solutions to the ED system.
	We showed that almost all SqK-spinors are WK-spinors with non-zero cosmological constant, and we constructed  six distinct solutions to the ED system.
	Furthermore, for the Lorentzian solutions, we clarify the range of $\phi$-sectional curvature for which the energy conditions are satisfied.
	This demonstrates that the SqK-spinors in types (i) and (ii) of Proposition \ref{pr:energy condition} exhibit quite different physical property from an energy condition view point although they are mathematically similar.
	
	We also found that a SqK-spinor solves to the EDM system with a contact Maxwell field.
	In this system, the SqK-spinor generates a Dirac current proportional to the Reeb vector field, and the Dirac current is the source of the contact Maxwell field, which in turn is the source of the Einstein gravity.
	Although it is generally difficult to construct exact solutions to an EDM system because of the interaction between the three elements, the gravity field, a spinor and a Maxwell field, we constructed a family of exact solutions.
	The total energy-momentum tensor is expressed only in terms of $\langle\psi,\psi\rangle$ and we know exactly how the curvature of spacetime varies with the configuration of matter.
	This is significant as a toy model.
	In the limit of $\langle\psi,\psi\rangle\to0$ for this family of solutions, the contact Maxwell field and the spinor field vanish and the spacetime becomes $AdS_3$ which satisfies $H=-1$.
	When matter enters into $AdS_3$, the spatial curvature increases, therefore $-1<H$ holds.
	
	Although the existence of SqK-spinors on three-dimensional pseudo-Riemannian Sasakian space-form has been shown, an explicit formula of a SqK-spinor but not Killing spinor has not been previously known, and we found formulas expressed using elementary functions with respect to a Sasakian frame.
	In \cite{Fujii 1886}, an explicit representation of a Killing spinor on the round $S^3$ was given.
	A SqK-spinor of type $(\frac{1}{2},\frac{H-1}{4})$ is a Killing spinor of round $S^3$ if $H=1$.
	According to the Proposition \ref{pr:explicit formula} therefore the Killing spinor of the round $S^3$ is the spinor whose components are all constant with respect to a Sasakian frame.
    This is a simple representation.
	An explicit representation of a SqK-spinor is also significant from a general relativistic perspective for constructing solutions to the EM and the EDM system.

	\section*{Acknowledgement}
	This work was partly supported by MEXT Promotion of Distinctive Joint Research Center Program JPMXP0723833165.
	The data that supports the findings of this study are available within the article and its supplementary material.

\end{document}